\newtheorem{theorem}{Theorem}[section]
\newtheorem{proposition}[theorem]{Proposition}
\newtheorem{definition}[theorem]{Definition}
\newtheorem{remark}[theorem]{Remark}
\newtheorem{conjecture}[theorem]{Conjecture}
\numberwithin{equation}{section}
\begin{document}

\title{Mutation and Chaos in Nonlinear Models of Heredity}

\author{Nasir Ganikhodjaev}
\address{Nasir Ganikhodjaev\\
Department of Computational \& Theoretical Sciences \\
Faculty of Sciences, International Islamic University Malaysia\\
P.O. Box, 141, 25710, Kuantan\\
Pahang, Malaysia} \email{{\tt nasirgani@@hotmail.com}}

\author{Mansoor Saburov}
\address{Mansoor Saburov\\
Department of Computational \& Theoretical Sciences \\
Faculty of Science, International Islamic University Malaysia\\
P.O. Box, 141, 25710, Kuantan\\
Pahang, Malaysia} \email{{\tt msaburov@@gmail.com}}

\author{Ashraf Mohamed Nawi}
\address{Ashraf Mohamed Nawi\\
Department of Computational \& Theoretical Sciences \\
Faculty of Science, International Islamic University Malaysia\\
P.O. Box, 141, 25710, Kuantan\\
Pahang, Malaysia}
\email{{\tt ashraf{\_}nawi@yahoo.com}}

\begin{abstract}
{In this short communication, we shall explore a nonlinear discrete dynamical system that naturally occurs in population systems to describe a transmission of a trait from parents to their offspring. We consider  a Mendelian inheritance for a single gene with three alleles and assume that to form a new generation, each gene has a possibility to mutate, that is, to change into a gene of the other kind. We investigate the derived models. A numerical simulation assists us to get some clear picture about chaotic behaviors of such models.

\vskip 0.3cm \noindent {\it Mathematics Subject Classification 2010}:
92Bxx, 37D45, 39A33.\\
{\it Key words}: Li-Yorke Chaos; Mutation; Regular transformation; Non-Ergodic transformation; Quadratic stochastic operator.}

\end{abstract}

\maketitle

\section{Introduction}
Recently chaotic dynamical systems are very popular in science and engineering. Besides the original definition of Li-Yorke chaos in \cite{LY}, there have been various definitions for "chaos" in the literature, and the most often used one is given by Devaney in \cite{Dev}. Although there is no universal definition for chaos, the essential feature of chaos is sensitive dependence on initial conditions so that the eventual behavior of the dynamics is unpredictable. The theory and methods of chaotic dynamical systems have been of fundamental importance not only in mathematical sciences, but also in physical, engineering, {\textbf{biological}}, and even economic sciences. In this paper, a chaos would be understood in the sense of Li-Yorke (see \cite{FBlanEGlasSKloyAMaa, FBlanVLop}).

In this short communication, we introduce and examine a family of nonlinear discrete dynamical systems that naturally occurs to describe a transmission of a trait from parents to their offspring. Here, we shall present some essential analytic and numerical results on dynamics of such models by avoiding some technical parts of the proofs. In upcoming papers, we shall explore the deepest investigation on asymptotic behaviors of the family of nonlinear dynamical systems by providing the proofs in details.  

As the first example, we consider a Mendelian inheritance of a single gene with two alleles $\textbf{{A}}$ and $\textbf{{a}}$ (see \cite{MLR}). Let an element $\textbf{{x}}$ represent a gene pool for a population and it have been expressed a linear combination of the alleles $\textbf{A}$ and $\textbf{{a}}$
\begin{eqnarray}\label{twoalleles}
\textbf{{x}}= x_1\textbf{{A}}+x_2\textbf{{a}}
\end{eqnarray} 
where, $0\leq x_1,x_2 \leq1 $ and $x_1+x_2=1.$ Then, $x_1, x_2$  are the percentage of the population  which carries the alleles $\textbf{A}$ and $\textbf{a}$ respectively. The rules of  the Mendelian inheritance indicate that the next generation
$$\textbf{{x}}^{\prime}= x^{\prime}_1\textbf{{A}}+x^{\prime}_2\textbf{{a}}$$
represents a gene pool for the population which carries the alleles $\textbf{A}$ and $\textbf{a}$ respectively, where
\begin{eqnarray}\label{TwoAllelGeneral}
\begin{cases}
x_1^{\prime}=P_{AA,A} x_1^2+2P_{Aa,A}x_1x_2+ P_{aa,A}x_2^2\\
x_2^{\prime}=P_{AA,a} x_1^2+2P_{Aa,a}x_1x_2+ P_{aa,a}x_2^2
\end{cases}
\end{eqnarray}

Here, $P_{AA,A}$ (resp. $P_{AA,a}$) is the probability that the child receives the allele $\textbf{A}$ (resp. $\textbf{a}$) from parents with the allele $\textbf{A}$;  $P_{Aa,A}$ (resp. $P_{Aa,a}$)  is the probability that the child receives the allele $\textbf{A}$ (resp. $\textbf{a}$) from parents with the alleles $\textbf{A}$ and $\textbf{a}$ respectively; and $P_{aa,A}$ (resp. $P_{aa,a}$)  is the probability that the child receives the allele $\textbf{A}$ (resp. $\textbf{a}$) from parents with allele $\textbf{a}$. It is evident that 
$$P_{..,A}+P_{..,a}=1, \quad P_{Aa,A}=P_{aA,A}, \quad  P_{Aa,a}=P_{aA,a}, \quad  x_1^{\prime}+x_2^{\prime}=1.$$

Thus, \eqref{TwoAllelGeneral} is a nonlinear dynamical system acting on the one dimensional symplex
$$S^1=\left\{(x_1,x_2)\in \mathbb{R}^2: x_1,x_2\geq 0 ,  x_1+x_2=1\right\}$$
that  describes  the distribution of the next generation which carries the alleles $\textbf{A}$ and $\textbf{a}$ respectively,  if the distribution of the current generation are known. 

Recall that in the Mendelian inheritance case, i.e., $P_{AA,A}=P_{aa,a}=1$ and $P_{AA,a}=P_{aa,A}=0$, the dynamical system \eqref{TwoAllelGeneral} has the following form
\begin{eqnarray}\label{TwoAllel01}
\begin{cases}
x_1^{\prime}= x_1^2+2P_{Aa,A}x_1x_2\\
x_2^{\prime}=2P_{Aa,a}x_1x_2+ x_2^2.
\end{cases}
\end{eqnarray}     
     
We assume that prior to a formation of a new generation each gene has a possibility to mutate, that is, to change into a gene of the other kind. Specifically, we assume that for each gene the mutation $\textbf{A}\rightarrow \textbf{a}$ occurs with probability $\alpha,$ and $\textbf{a}\rightarrow \textbf{A}$ occurs with probability $\beta.$ We assume that "\textit{the mutation occurs if only if both parents have the same allele}". Then, we have that $ P_{AA,a}= \alpha, \ P_{aa,A}=\beta$, $P_{AA,A}=1-\alpha, \ P_{aa,a}=1-\beta$ and the dynamical system \eqref{TwoAllelGeneral} has the following form
\begin{eqnarray}\label{TwoAllilalphabeta}
V:\begin{cases}
x_1^{\prime}=(1-\alpha) x_1^2+2P_{Aa,A}x_1x_2+ \beta x_2^2\\
x_2^{\prime}=\alpha x_1^2+2P_{Aa,a}x_1x_2+ (1-\beta)x_2^2.
\end{cases}
\end{eqnarray}

An operator $V:S^1\rightarrow S^1$ given by \eqref{TwoAllilalphabeta} is called a quadratic stochastic operator.

We introduce some standard terms in the theory of a discrete dynamical system $V:X\rightarrow X$. 

A sequence $\{\textbf{x}^{(n)}\}_{n=0}^\infty$ is called a trajectory of $V$ starting from an initial point $\textbf{x}^{0},$ where $\textbf{x}^{(n)}=V(\textbf{x}^{(n-1)})$ for any $n\in\mathbb{N}$. Recall that a  point $\textbf{x}$ is called a fixed point of  $V$ if $V(\textbf{x})=\textbf{x}.$ We denote a set of all fixed points by $Fix(V)$.
A dynamical system $V$ is called regular if a trajectory $\{\textbf{x}^{(n)}\}_{n=0}^\infty$ converges for any initial point $\textbf{x}$.
Note that if $V$ is regular, then the limiting point is a fixed point of $V.$ Thus, in a regular system, the fixed point of dynamical system $V$  describes a long run behavior of the trajectory of $V$ starting from any initial point.
The biological treatment of the regularity of dynamical system $V$  is rather clear: in a long run time, the distribution of species in the next generation coincide with distribution of species in the current generation, i.e., stable.

A fixed point set and an omega limiting set of quadratic stochastic operators (QSO) were deeply studied in \cite{K}-\cite{J}, and they play an important role in many applied problems \cite{Br,Lyu}. In the paper \cite{GRMFRU}, it was given a long self-contained exposition of recent achievements and open problems in the theory of quadratic stochastic operators.  

\begin{definition}
 A dynamical system $V:X\to X$ is said to be ergodic if the limit
\begin{equation}\label{erg}
\lim_{n\rightarrow \infty } \frac{1}{n} \sum_{k=0}^{n-1} V^k(\textbf{x})
\end{equation}
exists for any $\textit{\textbf{x}}\in X$.
\end{definition}

Based on some numerical calculations, S.Ulam  has conjectured \cite{U} that any QSO $V:S^{m-1}\to S^{m-1}$ acting on the finite dimensional simplex is ergodic.
However, in \cite{Z},  M.Zakharevich showed that, in general, Ulam's conjecture is false. Namely, M.Zakharevich showed that the following QSO $V_0:S^2\to S^2$ is not ergodic
\begin{eqnarray}\label{ExampleofZakharevich}
V_0:
\begin{cases}
x'_1=x_1^2+2x_1x_2\\
x'_2=x_2^2+2x_2x_3\\
x'_3=x_3^2+2x_3x_1
\end{cases}. 
\end{eqnarray}

In \cite{GZ}, Zakharevich's result was generalized in the class of Volterra QSO. Moreover, in \cite{GZ}, it was given a necessary and sufficient
condition of being non-ergodicity of Volterra QSO defined on $S^2.$ 

We define the $k$-th order Cesaro mean by the following formula $$Ces_k^{(n)}\left(\textbf{x},V\right)=\frac{1}{n}\sum\limits_{i=0}^{n-1}Ces_{k-1}^{(i)}\left(\textbf{x},V\right)$$
where $k\geq 1$ and $Ces_{0}^{(n)}\left(\textbf{x},V\right)=V^n(\textbf{x})$. It is clear that the first order Cesaro mean $Ces_1^{(n)}\left(\textbf{x},V\right)$ is nothing but $\frac{1}{n}\sum\limits_{k=0}^{n-1}V^k(\textbf{x})$. In this manner, Zakharevich's result says that the first order Cesaro mean $\left\{Ces_1^{(n)}\left(\textbf{x},V_0\right)\right\}_{n=0}^\infty$ of the trajectory  of operator \eqref{ExampleofZakharevich} diverges for any initial point except fixed points. Surprisingly, in \cite{MSabu2007}, it was shown that any order Cesaro mean $\left\{Ces_k^{(n)}\left(\textbf{x},V_0\right)\right\}_{n=0}^\infty$, where $k\geq 1$, of the trajectory  of operator \eqref{ExampleofZakharevich} diverges for any initial point except fixed points.
This leads to a conclusion that the operator $V_0$ given by \eqref{ExampleofZakharevich} might have unpredictable behavior.
 
In fact, in \cite{MSabu2012LY}, it was proven that an operator $V_0$ given by \eqref{ExampleofZakharevich} exhibits the Li-Yorke chaos. It is worth pointing out that some other properties of Volterra QSO were studied in \cite{MSabu2012, MSabu2013}.

Note that if QSO is regular, then it is ergodic. However, the reverse implication is not always true.  It is known that the dynamical system \eqref{TwoAllilalphabeta} is either regular or converges to a periodic-2 point (see \cite{Lyu2}). Therefore, in 1D simplex, any QSO is ergodic. In other words, a mutation in population system having \textbf{a single gene with two alleles} always exhibits an ergodic behavior (or almost regular).  It is of independent interest to study a mutation in population system having \textbf{a single gene with three alleles}. In the next section, we consider an inheritance of {a single gene with three alleles} $\textbf{a}_1,\textbf{a}_2$ and $\textbf{a}_3$ and show that a nonlinear dynamical system corresponding to the mutation exhibits a non-ergodic behavior (or Li-Yorke chaos).

\section{Inheritance for a Single Gene with Three Alleles}
 In this section, we shall derive a mathematical model of a inheritance of a single gene with three alleles. In this case, an element $\textbf{x}$ represents a population if its expression $\textbf{{x}}= x_1a_1+x_2a_2+x_3a_3$, as a linear combination of the alleles $\textbf{a}_1,\textbf{a}_2$ and $\textbf{a}_3$, satisfies the following conditions $ 0\leq x_1,x_2,x_3\leq1$ and $x_1+x_2+x_3=1.$ Then $x_1, x_2, x_3$  are the percentage of the population  which carries the alleles $\textbf{a}_1,\textbf{a}_2$ and $\textbf{a}_3$ respectively. 
 
We assume that prior to a formation of a new generation each gene has a possibility to mutate, that is, to change into a gene of the other kind. We assume that the mutation occurs if \textit{both parents have the same alleles}. Specifically, we will consider two types of the simplest mutations:

\begin{enumerate}
\item {Assume that mutations $a_1\rightarrow a_2$, $a_2 \rightarrow a_3$, and $a_3\rightarrow a_1$ occur with probability $\alpha$};

\item {Assume that mutations $a_1\rightarrow a_3$, $ a_2\rightarrow a_1$, 
and $a_3\rightarrow a_2$  occur with probability $\alpha.$} 
\end{enumerate}

Then the corresponding dynamical systems are defined on the two-dimensional simplex
$$S^2 = \{(x_1,x_2,x_3)\in \mathbb{R}^3:  x \geq 0, x_2\geq  0, x_3\geq  0,  x_1+x_2+x_3=1\}.$$
In the first mutation, we have
\begin{eqnarray}\label{Valpha}
V_\alpha:
\begin{cases}
x'_1=(1-\alpha)x_1^2+2x_1x_2+\alpha x_2^2\\
x'_2=(1-\alpha)x_2^2+2x_2x_3+\alpha x_3^2\\
x'_3=(1-\alpha)x_3^2+2x_3x_1+\alpha x_1^2
\end{cases}.
\end{eqnarray}
In the second mutation, we have
\begin{eqnarray}\label{Walpha}
W_\alpha:
\begin{cases}
x'_1=(1-\alpha)x_1^2+2x_1x_2+\alpha x_3^2\\
x'_2=(1-\alpha)x_2^2+2x_2x_3+\alpha x_1^2\\
x'_3=(1-\alpha)x_3^2+2x_3x_1+\alpha x_2^2
\end{cases}.
\end{eqnarray}

In both cases, if $\alpha=0$, i.e., if a mutation does not occur, then a dynamical systems \eqref{Valpha} and \eqref{Walpha} coincide with Zakharevich's operator \eqref{ExampleofZakharevich}. As we already mentioned, Zakharevich's operator exhibits a Li-Yorke chaos (see \cite{MSabu2012LY}). 

Let $\alpha=1$.  In the first case, an operator $V_1$ is a permutation of Zakharevich's operator \eqref{ExampleofZakharevich}. An omega limiting set of the permutation of Zakharevich's operator was studied in \cite{JURU}. By means of results \cite{Z}, \cite{MSabu2007}, \cite{MSabu2012LY}, and \cite{JURU}, one can show that the operator $V_1$ is non-ergodic as well as a Li-Yorke chaos. Let $\alpha=1$. In the second case, an operator $W_1$ is a permutation of an operator which was studied in \cite{GR1989}. By applying the same method which was given in \cite{GR1989}, we may show that the operator $W_1$ is regular. 

It is easy to check that $V_\alpha=(1-\alpha)V_0+\alpha V_1$ and $W_\alpha=(1-\alpha)W_0+\alpha W_1$.  In other words, in the first case, a mutation operator is a convex combination of two Li-Yorke chaotic operators and in the second case, a mutation operator is a convex combination of a Li-Yorke chaotic and  regular operators. Both operators $V_\alpha$ and $W_\alpha$ were not studied in either paper \cite{JURU} or \cite{GR1989}.  In the next section, we are going to present some essential analytic and numerical results on dynamics of $V_\alpha$ and $W_\alpha$, by avoiding some technical parts of the proofs. In upcoming papers, we shall explore the deepest investigation on asymptotic behaviors of $V_\alpha$ and $W_\alpha$ by providing the proofs in details.

\section{ Attractors: Analytic and Numerical Results}

In this section, we shall study dynamics of the operators \eqref{Valpha} and \eqref{Walpha}. Here, we shall present some pictures of attractors of the operators \eqref{Valpha} and \eqref{Walpha}.

\subsection{Analytic Results on Dynamics of $V_\alpha$}

We are aiming to present some analytic results on dynamics of $V_\alpha:S^2\to S^2$:
\begin{eqnarray}\label{V_alpha}
V_\alpha:
\begin{cases}
x'_1=(1-\alpha)x_1^2+2x_1x_2+\alpha x_2^2\\
x'_2=(1-\alpha)x_2^2+2x_2x_3+\alpha x_3^2\\
x'_3=(1-\alpha)x_3^2+2x_3x_1+\alpha x_1^2
\end{cases},
\end{eqnarray}
where $V_\alpha(x)=x'=(x'_1,x'_2,x'_3)$ and $0<\alpha<1$. As we already mentioned, this operator can be written in the following form: $V_\alpha=(1-\alpha)V_0+\alpha V_1$ for any $0<\alpha<1$, where
$$
V_0:
\begin{cases}
x'_1=x_1^2+2x_1x_2\\
x'_2=x_2^2+2x_2x_3\\
x'_3=x_3^2+2x_3x_1
\end{cases}, \quad \quad \quad 
V_1:
\begin{cases}
x'_1=x_2^2+2x_1x_2\\
x'_2=x_3^2+2x_2x_3\\
x'_3=x_1^2+2x_3x_1
\end{cases}.
$$

Let 
$$
P=\left(
\begin{array}{ccc}
0 & 1 & 0\\
0 & 0 & 1\\
1 & 0 & 0
\end{array}
\right)
$$ be a permutation. The proofs of the following results are straightforward.

\begin{proposition}\label{permutationandV}
Let $V_\alpha :S^2\to S^2$ be a quadratic stochastic operator given by \eqref{V_alpha}, where $\alpha\in(0,1)$. Let $Fix(V_\alpha)$ and $\omega(x^0)$ be sets of fixed points and omega limiting points of $V_\alpha$, respectively. Then the following statements hold true.
\begin{itemize}
\item[(i)] Operators $P$ and $V_\alpha$ are commutative, i.e., $P\circ V_\alpha=V_\alpha\circ P$;
\item[(ii)] If $x\in Fix(V_\alpha)$ then $Px\in Fix(V_\alpha)$;
\item[(iii)] If $Fix(V_\alpha)$ is a finite set then $|Fix(V_\alpha)|\equiv 1 \ (mod \ 3)$;
\item[(iv)] One has that $P(\omega(x^0))=\omega(Px^0)$, for any $x^0\in S^2$.  
\end{itemize}
\end{proposition}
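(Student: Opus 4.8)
The plan is to treat part (i) as the computational core and to deduce the remaining three parts as formal consequences of the cyclic equivariance it expresses. First I would observe that $P$ acts on a point $x=(x_1,x_2,x_3)$ by the cyclic shift $Px=(x_2,x_3,x_1)$, and that the three coordinate maps defining $V_\alpha$ are obtained from one another by the same cyclic relabelling of indices $1\to 2\to 3\to 1$. To prove (i) I would simply evaluate both compositions on an arbitrary $x$: compute $V_\alpha(x)=(x'_1,x'_2,x'_3)$ and then $P(V_\alpha(x))=(x'_2,x'_3,x'_1)$, and separately compute $V_\alpha(Px)$ by substituting $(x_2,x_3,x_1)$ into the defining formulas. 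Both yield the same triple, the first entry being $(1-\alpha)x_2^2+2x_2x_3+\alpha x_3^2$ and so on cyclically; hence $P\circ V_\alpha=V_\alpha\circ P$. There is no genuine obstacle here beyond bookkeeping, since the identity is forced by the built-in cyclic symmetry of the model.

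Part (ii) is then immediate: if $V_\alpha(x)=x$, then $V_\alpha(Px)=(V_\alpha\circ P)(x)=(P\circ V_\alpha)(x)=P(V_\alpha(x))=Px$, so $Px\in Fix(V_\alpha)$. For part (iii) I would view $\langle P\rangle\cong\mathbb{Z}/3\mathbb{Z}$ as a group (note $P^3=\mathrm{Id}$) acting on the finite set $Fix(V_\alpha)$, the action being well defined by (ii). The orbits partition $Fix(V_\alpha)$ and each has cardinality dividing $3$, hence equal to $1$ or $3$. A singleton orbit corresponds to a point with $Px=x$, i.e. $x_1=x_2=x_3$; intersected with $S^2$ this pins down the unique barycenter $(\tfrac13,\tfrac13,\tfrac13)$, which one checks directly is fixed by $V_\alpha$ for every $\alpha$. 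Thus there is exactly one orbit of size $1$ and all remaining orbits have size $3$, giving $|Fix(V_\alpha)|=1+3k\equiv 1\pmod 3$.

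For part (iv) I would first upgrade (i) to $P\circ V_\alpha^{\,n}=V_\alpha^{\,n}\circ P$ for all $n\ge 0$ by an easy induction, so that the trajectory of $Px^0$ is exactly the $P$-image of the trajectory of $x^0$, namely $V_\alpha^{\,n}(Px^0)=P\bigl(V_\alpha^{\,n}(x^0)\bigr)$. Since $P$ is a linear homeomorphism of $S^2$ with continuous inverse $P^{-1}=P^2$, it carries convergent subsequences to convergent subsequences and back. Concretely, $y\in\omega(x^0)$ via $V_\alpha^{\,n_k}(x^0)\to y$ gives $V_\alpha^{\,n_k}(Px^0)=P\bigl(V_\alpha^{\,n_k}(x^0)\bigr)\to Py$, so $Py\in\omega(Px^0)$, proving $P(\omega(x^0))\subseteq\omega(Px^0)$; applying the same argument with $P^{-1}$ yields the reverse inclusion, hence equality. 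The only point deserving real care across these parts is the identification in (iii) of the barycenter as the sole $P$-fixed point of the simplex together with its membership in $Fix(V_\alpha)$; everything else is a direct consequence of the commutation relation established in (i).
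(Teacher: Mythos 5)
Your proof is correct. The paper gives no argument for this proposition at all---it merely declares the proofs ``straightforward''---and your write-up supplies exactly the standard argument the authors evidently had in mind: direct coordinatewise verification of the cyclic equivariance in (i), with (ii) formal, (iii) by counting orbits of the $\mathbb{Z}/3\mathbb{Z}$-action generated by $P$ (where you rightly isolate the one point needing care, namely that the barycenter $\left(\frac{1}{3},\frac{1}{3},\frac{1}{3}\right)$ is the unique $P$-fixed point of $S^2$ and does lie in $Fix(V_\alpha)$, so there is exactly one orbit of size one), and (iv) by equivariance of the iterates together with $P$ being a homeomorphism with $P^{-1}=P^2$.
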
  

We are aiming to study the fixed point set $Fix(V_\alpha)$, where  $\alpha\in(0,1)$. It is worth mentioning that $Fix(V_0)=\{e_1,e_2,e_3, C\}$ and $Fix(V_1)=\{C\}$, where $e_1,e_2,e_3$ are vertexes of the simplex $S^2$ and $C=\left(\frac{1}{3},\frac{1}{3},\frac{1}{3}\right)$ is a center of the simplex $S^2$.

Recall \cite{Lyu} that a fixed point $x^0\in Fix(V_\alpha)$ is non-degenerate if and only if the following determinant is nonzero at the fixed point $x^0$:
\begin{eqnarray}
\left|
\begin{array}{ccc}
\frac{\partial x'_1}{\partial x_1}-1 & \frac{\partial x'_1}{\partial x_2} & \frac{\partial x'_1}{\partial x_3}\\
\frac{\partial x'_2}{\partial x_1} & \frac{\partial x'_2}{\partial x_2}-1 & \frac{\partial x'_2}{\partial x_3}\\
1 & 1 & 1
\end{array}
\right|\neq 0
\end{eqnarray}

\begin{proposition}\label{uniquefixedpoint}
Let $V_\alpha :S^2\to S^2$ be a quadratic stochastic operator given by \eqref{V_alpha}, where $\alpha\in(0,1)$. Let $C=\left(\frac{1}{3},\frac{1}{3},\frac{1}{3}\right)$ be a center of the simplex $S^2$. Then the following statements hold true:
\begin{itemize}
\item[(i)] All fixed points are non-degenerate;
\item[(ii)] One has that $Fix(V_\alpha)=\{C\}$.
\end{itemize}
\end{proposition}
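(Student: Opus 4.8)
The plan is to establish statement (ii) first and then deduce (i) by a single determinant evaluation at $C$.

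\emph{Reducing the fixed-point equations.} Using the simplex relation $x_1+x_2+x_3=1$ to write each $x_i=x_i(x_1+x_2+x_3)$, I would subtract and factor every equation $x_i'=x_i$. For the first coordinate this gives $x_1'-x_1=\alpha(x_2^2-x_1^2)+x_1(x_2-x_3)$, so the fixed-point system of \eqref{V_alpha} is equivalent to the cyclic system
\[
x_1(x_2-x_3)=\alpha(x_1-x_2)(x_1+x_2),\qquad
x_2(x_3-x_1)=\alpha(x_2-x_3)(x_2+x_3),
\]
together with its third cyclic image $x_3(x_1-x_2)=\alpha(x_3-x_1)(x_3+x_1)$; these three relations sum to $0$, which is the only redundancy.

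\emph{No boundary, then interior uniqueness.} If a coordinate vanishes, say $x_3=0$, the third relation reduces to $\alpha x_1^2=0$, forcing $x_1=0$ and $x_2=1$, whereupon the first relation reads $\alpha=0$, contradicting $\alpha\in(0,1)$; by the cyclic symmetry recorded in Proposition~\ref{permutationandV}(i)--(ii) the other two faces are ruled out identically, so every fixed point is interior. For the interior case, since the relabeling induced by $P$ permutes the three relations I may assume $x_1=\max\{x_1,x_2,x_3\}$. Suppose the coordinates are not all equal. If exactly two coincide, say $x_1=x_2$, the first relation collapses to $x_1(x_1-x_3)=0$, forcing $x_3=x_1$ (as $x_1>0$), a contradiction; the other two pairings are the same. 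Hence the coordinates are pairwise distinct and $x_1$ is a strict maximum, so the right-hand side of the first relation is strictly positive, which forces $x_2>x_3$. But then in the third relation the left-hand side $x_3(x_1-x_2)$ is strictly positive while the right-hand side $\alpha(x_3-x_1)(x_3+x_1)$ is strictly negative — a contradiction. Therefore $x_1=x_2=x_3=\tfrac13$, i.e. $Fix(V_\alpha)=\{C\}$, proving (ii).

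\emph{Non-degeneracy.} With (ii) in hand, (i) only requires checking that the determinant displayed before the proposition is nonzero at $C$. Computing the partials of \eqref{V_alpha} and evaluating at $C$, the relevant entries become $\tfrac{\partial x_1'}{\partial x_1}-1=\tfrac{\partial x_2'}{\partial x_2}-1=p$ and $\tfrac{\partial x_1'}{\partial x_2}=\tfrac{\partial x_2'}{\partial x_3}=q$ with $p=\tfrac{1-2\alpha}{3}$ and $q=\tfrac{2+2\alpha}{3}$, while the remaining cross terms vanish; expanding along the last row yields the value $p^2-pq+q^2=\bigl(p-\tfrac{q}{2}\bigr)^2+\tfrac34 q^2$, which is strictly positive since $q>0$ for all $\alpha\in(0,1)$. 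Thus $C$ is non-degenerate and (i) follows.

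The crux is the interior-uniqueness step: the factored cyclic form together with the cyclic symmetry reduces the whole question to an elementary sign analysis, and the one thing to get right is that placing the strict maximum at $x_1$ and chasing the forced inequalities through two of the three relations produces the sign contradiction. The boundary exclusion and the determinant computation are routine by comparison.
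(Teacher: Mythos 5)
Your proof is correct, and it takes a genuinely different route from the paper's. Your factorizations are right: $x_1'-x_1=\alpha(x_2^2-x_1^2)+x_1(x_2-x_3)$ and its cyclic images do reproduce the fixed-point system of \eqref{V_alpha}, the boundary exclusion is sound (and matches what the paper gets more quickly from $V_\alpha(\partial S^2)\subset int S^2$), and the sign-chase is airtight: the cyclic symmetry of Proposition \ref{permutationandV} only lets you place the maximum at $x_1$, not order the other two coordinates, but your argument correctly handles this by \emph{deriving} $x_2>x_3$ from the first relation before contradicting the third; your determinant value at $C$, namely $p^2-pq+q^2=\frac{4\alpha^2+2\alpha+1}{3}$, agrees with the paper's general formula $4(1-\alpha+\alpha^2)(x_1x_2+x_1x_3+x_2x_3)+2\alpha-1$ evaluated at $C$. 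The paper argues in the opposite logical order and with heavier machinery: it first proves non-degeneracy at \emph{every} hypothetical fixed point --- directly from positivity of the determinant when $\frac12\leq\alpha<1$, and via an inequality contradiction (using $x_1^2+x_2^2+x_3^2=\frac{1+2\alpha^2}{2(1-\alpha+\alpha^2)}$ at a degenerate fixed point) when $0<\alpha<\frac12$ --- and then deduces uniqueness topologically, combining the absence of boundary fixed points with Lyubich's index results (Theorem 8.1.4: the number of non-degenerate fixed points is odd; Corollary 8.1.7: it is at most $4$) and the mod-$3$ count of Proposition \ref{permutationandV}(iii). Your route buys elementarity and self-containedness: uniqueness is obtained by direct algebra, no appeal to degree theory is needed, and statement (i) legitimately collapses to a single evaluation at $C$ once (ii) is in hand. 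What the paper's route buys is non-degeneracy of all fixed points as standalone prior information --- exactly what powers its parity-and-counting argument, and a technique that generalizes to situations where the fixed-point equations do not factor as cleanly as they do here; but given that your direct uniqueness proof goes through, that extra generality is not needed for this proposition.
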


\begin{proof}
(i). Let $x\in Fix(V_\alpha)$ be a fixed point. One can easily check that 
\begin{eqnarray*}
\left|
\begin{array}{ccc}
\frac{\partial x'_1}{\partial x_1}-1 & \frac{\partial x'_1}{\partial x_2} & \frac{\partial x'_1}{\partial x_3}\\
\frac{\partial x'_2}{\partial x_1} & \frac{\partial x'_2}{\partial x_2}-1 & \frac{\partial x'_2}{\partial x_3}\\
1 & 1 & 1
\end{array}
\right|=4(1-\alpha+\alpha^2)(x_1x_2+x_1x_3+x_2x_3)+2\alpha-1
\end{eqnarray*}

If $\frac{1}{2}\leq \alpha <1$ then the above expression is positive. This means that, in the case $\frac{1}{2}\leq \alpha <1$, all fixed point are non-degenerate. 

Let us consider the case $0<\alpha<\frac{1}{2}.$ In this case, the above expression is equal to zero if and only if $x_1x_2+x_1x_3+x_2x_3=\frac{1-2\alpha}{4(1-\alpha+\alpha^2)}$. Since $x_1+x_2+x_3=1$, we have that $x_1^2+x_2^2+x_3^2=\frac{1+2\alpha^2}{2(1-\alpha+\alpha^2)}.$ 

Without loss of generality, we may assume that $x_1\geq \max\{x_2,x_3\}$ (See Proposition \ref{permutationandV} (i)). 

Let $x_2\geq x_3$. Since $x\in Fix(V_\alpha)$, we have that $x_2=(1-\alpha)x_2^2+2x_2x_3+\alpha x_3^2.$ We then obtain that
\begin{eqnarray*}
\frac{1+2\alpha^2}{2(1-\alpha+\alpha^2)}&=&x_1^2+x_2^2+x_3^2\\
&=&x_1^2+[(1-\alpha)(x_2^2+2x_2x_3)+\alpha(x_3^2+2x_2x_3)]^2+x_3^2\\
&\leq&x_1^2+[(1-\alpha)x_2+\alpha x_3]^2+x_3^2\\
&<&x_1^2+x_2^2+x_3^2=\frac{1+2\alpha^2}{2(1-\alpha+\alpha^2)}
\end{eqnarray*} 
This is a contradiction. In the similar manner, one can have a contradiction whenever $x_3\geq x_2$. This shows that, in the case $0<\alpha<\frac{1}{2}$, all fixed points are non-degenerate.

(ii). We want to show that $Fix(V_\alpha)=\{C\}$. It is clear that $V_\alpha(\partial S^2)\subset intS^2$. This means that the operator $V_\alpha$ does not have any fixed point on the boundary $\partial S^2$ of the simplex $S^2$, i.e., $Fix(V_\alpha)\cap \partial S^2=\emptyset$. Moreover, all fixed point are non-degenerate. Due to Theorem 8.1.4 in \cite{Lyu}, $|Fix(V_\alpha)|$ should be odd. On the other hand, due to Corollary 8.1.7 in \cite{Lyu}, one has that $|Fix(V_\alpha)|\leq 4$. Proposition \ref{permutationandV}, (iii) yields that $|Fix(V_\alpha)|=1$. Simple calculations show that $Fix(V_\alpha)=\{C\}$. 
\end{proof}

We can easily check a local behavior of the fixed point $C=\left(\frac{1}{3},\frac{1}{3},\frac{1}{3}\right)$.

\begin{proposition}\label{localbehavioroffixedpoint}
Let $V_\alpha :S^2\to S^2$ be a quadratic stochastic operator given by \eqref{V_alpha}, where $\alpha\in(0,1)$. Then the following statements hold true:
\begin{itemize}
\item[(i)] If $\alpha\neq\frac{1}{2}$ then the fixed point $C=\left(\frac{1}{3},\frac{1}{3},\frac{1}{3}\right)$ is repelling;
\item[(ii)] If $\alpha=\frac{1}{2}$ then the fixed point $C=\left(\frac{1}{3},\frac{1}{3},\frac{1}{3}\right)$ is non-hyperbolic.
\end{itemize}
\end{proposition}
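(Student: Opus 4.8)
The plan is to linearize $V_\alpha$ at the center $C$ and read off the spectrum of the derivative restricted to the simplex. First I would compute the Jacobian $J=DV_\alpha(C)$, whose entries $\partial x_i'/\partial x_j$ are evaluated at $x_1=x_2=x_3=\tfrac13$. Since the three coordinate maps in \eqref{V_alpha} are cyclic permutations of one another (this is precisely the commutation $P\circ V_\alpha=V_\alpha\circ P$ of Proposition \ref{permutationandV}(i)), the matrix $J$ turns out to be \emph{circulant}:
\begin{equation*}
J=\frac{2}{3}\begin{pmatrix} 2-\alpha & 1+\alpha & 0\\ 0 & 2-\alpha & 1+\alpha\\ 1+\alpha & 0 & 2-\alpha\end{pmatrix}.
\end{equation*}

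Next I would diagonalize $J$ in the standard circulant eigenbasis $(1,\omega^k,\omega^{2k})$, $k=0,1,2$, with $\omega=e^{2\pi i/3}$. The point that needs the most care is that the orbit stays on $S^2$, so only the action of $J$ on the tangent plane $\{\xi:\xi_1+\xi_2+\xi_3=0\}$ is relevant. The mode $k=0$ has eigenvector $(1,1,1)$, which is normal to the simplex and must be discarded; its eigenvalue $2$ is spurious. The remaining eigenvectors $(1,\omega,\omega^2)$ and $(1,\omega^2,\omega)$ have coordinate sum $0$, hence span the tangent plane, and carry a complex-conjugate pair $\lambda,\bar\lambda$ with
\begin{equation*}
\lambda=(1-\alpha)+\tfrac{1}{\sqrt3}(1+\alpha)\,i,\qquad |\lambda|^2=(1-\alpha)^2+\tfrac{(1+\alpha)^2}{3}=\frac{4(1-\alpha+\alpha^2)}{3}.
\end{equation*}

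Finally I would read off the classification from this modulus. The identity
\begin{equation*}
|\lambda|^2-1=\frac{4(1-\alpha+\alpha^2)-3}{3}=\frac{(2\alpha-1)^2}{3}
\end{equation*}
settles both parts at once: for $\alpha\neq\tfrac12$ we have $(2\alpha-1)^2>0$, so both tangential eigenvalues satisfy $|\lambda|>1$ and $C$ is repelling, giving (i); for $\alpha=\tfrac12$ we get $|\lambda|=1$, so $J$ carries an eigenvalue on the unit circle and $C$ is non-hyperbolic, giving (ii). I expect the only real obstacle to be the tangent-space reduction: one must exclude the normal eigenvalue $2$ and confirm that the circulant modes for $k=1,2$ genuinely lie in $\{\xi_1+\xi_2+\xi_3=0\}$, since the full $3\times3$ spectrum on its own would wrongly suggest repulsion even at $\alpha=\tfrac12$.
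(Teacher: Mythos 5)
Your proposal is correct and follows essentially the approach the paper intends: the paper states this proposition without proof ("we can easily check"), but its later remark that $Sp(J(C))=\left\{2,\frac{1\pm\sqrt{3}i}{2}\right\}$ at $\alpha=\frac{1}{2}$ confirms your Jacobian computation, since your $\lambda=(1-\alpha)+\frac{1+\alpha}{\sqrt{3}}i$ gives exactly $\frac{1+\sqrt{3}i}{2}$ there. Your circulant diagonalization, the identity $|\lambda|^2-1=\frac{(2\alpha-1)^2}{3}$, and your care in discarding the normal eigenvalue $2$ (eigenvector $(1,1,1)$, transverse to the invariant plane $x_1+x_2+x_3=1$) are all accurate and complete.
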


We shall separately study two cases $\alpha\neq\frac{1}{2}$ and $\alpha=\frac{1}{2}$.

\begin{theorem}
Let $V_\alpha :S^2\to S^2$ be a quadratic stochastic operator given by \eqref{V_alpha}, where $\alpha\neq\frac{1}{2}$. Then $\omega(x^0)\subset intS^2$ is an infinite compact subset of $intS^2$ for any $x^0\neq C$.
\end{theorem}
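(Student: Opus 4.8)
The plan is to establish the three asserted properties of $\omega(x^0)$ in turn, disposing of compactness and the inclusion $\omega(x^0)\subset intS^2$ by soft arguments and reserving infiniteness for a finer analysis. Write $\omega:=\omega(x^0)$ throughout. Since $S^2$ is compact and $V_\alpha$ is continuous, $\omega$ is a nonempty compact set and is strongly invariant, $V_\alpha(\omega)=\omega$; in particular $\omega\subset V_\alpha(S^2)$. This already yields compactness, so the real content is the location and the cardinality of $\omega$.

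For the inclusion $\omega\subset intS^2$, the idea is to identify the boundary points lying in the image $V_\alpha(S^2)$ and to show they admit no admissible preimage inside a strongly invariant set. A direct inspection of \eqref{V_alpha} shows that if $V_\alpha(q)\in\partial S^2$ with, say, vanishing third coordinate, then $(1-\alpha)q_3^2+2q_3q_1+\alpha q_1^2=0$, which forces $q_1=q_3=0$, i.e. $q=e_2$ and $V_\alpha(e_2)=(\alpha,1-\alpha,0)$. Cyclically, using Proposition \ref{permutationandV}(i), one gets $V_\alpha(S^2)\cap\partial S^2=\{V_\alpha(e_1),V_\alpha(e_2),V_\alpha(e_3)\}$, three points each having two strictly positive coordinates. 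If some $p\in\omega\cap\partial S^2$ existed, then $p$ would be one of these three points; strong invariance $\omega=V_\alpha(\omega)$ would then furnish $q\in\omega$ with $V_\alpha(q)=p$, and the computation above forces $q$ to be a vertex. But no vertex lies in $V_\alpha(S^2)\cap\partial S^2$, contradicting $q\in\omega\subset V_\alpha(S^2)$. Hence $\omega\cap\partial S^2=\emptyset$ and $\omega$ is a compact subset of the open set $intS^2$.

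The hard part is infiniteness, and the strategy is to rule out every finite $\omega$. A finite, strongly invariant $\omega$ must be a single periodic orbit, so it suffices to exclude (a) $\omega=\{C\}$ and (b) $\omega$ a cycle of period $k\ge 2$ in $intS^2$. For (a), Proposition \ref{localbehavioroffixedpoint} gives that $C$ is repelling precisely because $\alpha\neq\frac12$: in a norm adapted to $DV_\alpha(C)$ one has $\|V_\alpha(x)-C\|\ge\lambda\|x-C\|$ with $\lambda>1$ on a neighborhood of $C$, so no orbit can accumulate at $C$ unless it reaches $C$ in finitely many steps; as the exceptional set $\bigcup_{n\ge0}V_\alpha^{-n}(C)$ is at most countable, $\omega=\{C\}$ is impossible for the relevant $x^0\neq C$.

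For (b) the plan is to show that no period-$k$ cycle can be an $\omega$-limit set. I would pass to the two-dimensional representation $\widehat{V}_\alpha$ of $V_\alpha$ on $S^2$, compute its Jacobian determinant, and establish an area-expansion estimate $|\det D\widehat{V}_\alpha|\ge c>1$ on the relevant region; multiplying around a putative cycle then shows the product of the derivatives has an eigenvalue of modulus $>1$, so the cycle cannot be a sink, and the only orbits converging to a saddle cycle lie on its one-dimensional stable manifolds. Combining this with the repelling center, the inward mapping of the boundary established above, and the order-three symmetry $P$ of Proposition \ref{permutationandV}, one would argue that these exceptional stable curves cannot carry the orbit of an arbitrary $x^0\neq C$ to a finite limit set, whence $\omega$ is infinite. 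This final step, promoting ``no attracting cycle'' to ``no finite $\omega$ for every $x^0\neq C$'', is the genuine obstacle and is precisely the technical analysis deferred to the detailed companion paper; the remaining parts are elementary.
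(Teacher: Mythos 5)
Your compactness argument and your treatment of the boundary are sound, and in fact more careful than the paper's own: the paper simply asserts $V_\alpha(S^2)\subset intS^2$, which is literally false at the vertices (e.g. $V_\alpha(e_2)=(\alpha,1-\alpha,0)\in\partial S^2$), whereas your two-step argument --- identifying $V_\alpha(S^2)\cap\partial S^2$ as the three images of the vertices and then using strong invariance $\omega=V_\alpha(\omega)$ to show a boundary point of $\omega$ would need a vertex in $\omega$, which is impossible since no vertex lies in $V_\alpha(S^2)$ --- correctly patches this. The genuine gap is in the infiniteness step, and it is twofold. First, your exclusion of $\omega(x^0)=\{C\}$ discards only the at most countable set $\bigcup_{n\ge 0}V_\alpha^{-n}(C)$, but the theorem quantifies over \emph{every} $x^0\neq C$; to close this you would have to show $V_\alpha^{-1}(C)=\{C\}$ (repelling gives this only locally), and you prove nothing of the sort. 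Second, and more seriously, your case (b) never arrives at a proof: the area-expansion estimate $|\det D\widehat{V}_\alpha|\ge c>1$ ``on the relevant region'' is not verified (and is doubtful uniformly on $intS^2$), and even granting it you correctly observe that it only rules out attracting cycles --- a saddle cycle is still the $\omega$-limit set of every point on its stable manifold, and such a point can perfectly well be some $x^0\neq C$. You concede this yourself, deferring the decisive step, so the proposal does not prove the statement.

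It is worth seeing how the paper closes the same gap: it invokes the assertion that $V_\alpha$ has \emph{no periodic points} at all (other than $C$, which is excluded from $\omega(x^0)$ by its repelling character), and then infiniteness is immediate --- pick any $x^{*}\in\omega(x^0)$; by invariance its entire forward orbit lies in $\omega(x^0)$, and a finite orbit would contain a periodic point, contradiction. Admittedly the paper supplies no proof of the no-periodic-points claim either (this is an announcement-style paper that explicitly defers technical proofs), but its route shows what the correct key lemma is: global nonexistence of periodic orbits, a statement about the map itself, rather than your dynamical classification of which cycles can attract which initial points. Your hyperbolicity program is strictly weaker than that lemma and cannot be completed into a proof of the theorem as stated without it.
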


\begin{proof}
Let $\alpha\neq\frac{1}{2}$. Since $V_\alpha$ is continuous and $V_\alpha(S^2)\subset intS^2$, an omega limiting set $\omega(x^0)$ is a nonempty compact set and $\omega(x^0)\subset intS^2$, for any  $x^0\neq C$. We want to show that $\omega(x^0)$ is infinite for any $x^0\neq C.$ Since $C$ is repelling, we have that $C\notin \omega(x^0)$. Let us pick up any point $x^{*}\in\omega(x^0)$ from the set $\omega(x^0)$. Since the operator $V_\alpha$ does not have any periodic point, the trajectory $\left\{V^{(n)}_\alpha(x^{*})\right\}_{n=1}^\infty$ of the point $x^{*}$ is infinite. Since $V_\alpha$ is continuous, we have that $\left\{V^{(n)}_\alpha(x^{*})\right\}_{n=1}^\infty\subset \omega(x^0).$ This shows that $\omega(x^0)$ is infinite for any $x^0\neq C.$    
\end{proof}

\begin{remark}
It is worth mentioning  that the sets of omega limiting  points $\omega_{V_0}(x^0)$ and $\omega_{V_1}(x^0)$ of both operators $V_0$ and $V_1$ are infinite. However, unlike the operator $V_\alpha$, we have inclusions $\omega_{V_0}(x^0)\subset \partial S^2$ and $\omega_{V_1}(x^0)\subset \partial S^2$. Moreover, both operators $V_0$ and $V_1$ are non-ergodic.
\end{remark}

\begin{conjecture}
Let $V_\alpha :S^2\to S^2$ be a quadratic stochastic operator given by \eqref{V_alpha}, where $\alpha\neq\frac{1}{2}$. Then the following statements hold true:

\begin{itemize}
\item[(i)] {The operator $V_\alpha$ is non-ergodic;}

\item[(ii)] {The operator $V_\alpha$ exhibits a Li-Yorke chaos.}
\end{itemize}

\end{conjecture}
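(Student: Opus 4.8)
The plan is to establish parts (i) and (ii) together, building on three ingredients already secured: the convex decomposition $V_\alpha=(1-\alpha)V_0+\alpha V_1$, the commuting cyclic permutation $P$ of Proposition \ref{permutationandV}, and the structural theorem above, which tells us that for $\alpha\neq\frac{1}{2}$ the center $C$ is repelling (Proposition \ref{localbehavioroffixedpoint}), $V_\alpha$ has no periodic points, and $\omega(x^0)$ is an infinite compact subset of $intS^2$ for every $x^0\neq C$. The first and decisive step would be to determine the topology of the interior attractor $\mathcal{A}:=\overline{\bigcup_{x^0\neq C}\omega(x^0)}$; everything else should flow from the dynamics induced on $\mathcal{A}$. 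To locate $\mathcal{A}$ I would search for a Lyapunov-type function adapted to the cyclic symmetry, built from the elementary symmetric functions $x_1x_2+x_2x_3+x_3x_1$ and $x_1x_2x_3$, paralleling the monotone quantity that drives Zakharevich's operator $V_0$ toward the boundary in \cite{Z}, \cite{GZ}. Since here the attractor lies in the open simplex rather than on $\partial S^2$, the goal is to show that such a function forces every orbit other than $C$ to spiral away from $C$ and accumulate on a single $P$-invariant closed curve $\Gamma\subset intS^2$, with $\omega(x^0)=\Gamma$ independent of $x^0\neq C$; this would reduce the whole question to a one-dimensional return map.

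For non-ergodicity (i), I would adapt the oscillation scheme of \cite{Z}, \cite{GZ}, \cite{MSabu2007}. The commutation $P\circ V_\alpha=V_\alpha\circ P$ forces any hypothetical limit $\lim_n Ces_1^{(n)}(x,V_\alpha)$ to be $P$-symmetric, hence to coincide with the center $C$. The strategy is then to contradict this by a quantitative estimate: the threefold symmetry makes the orbit traverse three congruent arcs of $\Gamma$ in succession, and one wants to show that the dwell times in these arcs grow in such a way that the running averages never homogenize, so that $Ces_1^{(n)}(x,V_\alpha)$ stays bounded away from $C$ while oscillating. As in \cite{MSabu2007}, the same non-uniformity should then propagate to every higher-order Ces\`aro mean $Ces_k^{(n)}(x,V_\alpha)$.

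For Li-Yorke chaos (ii), I would pass to the one-dimensional dynamics on $\Gamma$ (or to an angular factor extracted from the $P$-symmetry) and exhibit either a period-three point of the factor map, invoking the Li-Yorke argument already used for $V_0$ in \cite{MSabu2012LY}, or a horseshoe yielding positive topological entropy. The resulting uncountable scrambled set would then lift back through the semiconjugacy to a scrambled set for $V_\alpha$ inside $intS^2$, and the repelling nature of $C$ (Proposition \ref{localbehavioroffixedpoint}) supplies the sensitive dependence needed to separate proximal pairs.

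The principal obstacle is that the absence of periodic points cuts sharply against a naive circle picture. If $V_\alpha\vert_\Gamma$ were an orientation-preserving homeomorphism, then a rational rotation number would force periodic points (which are excluded), while an irrational one would make $V_\alpha\vert_\Gamma$ uniquely ergodic and hence $V_\alpha$ ergodic, flatly contradicting (i). The heart of the proof must therefore be to show that $V_\alpha$ restricted to its attractor is a \emph{genuinely non-invertible}, non-uniquely-ergodic circle endomorphism, equivalently that recurrence on $\Gamma$ is non-uniform. Pinning down the exact topology of $\Gamma$ together with these dwell-time asymptotics is the delicate, computation-heavy part, and is precisely what the present note leaves unresolved, which is why the statement is recorded as a conjecture rather than a theorem.
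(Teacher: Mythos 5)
You should note at the outset that the paper offers no proof of this statement: it is recorded there explicitly as a conjecture, with the authors deferring all detailed arguments to future work. Your proposal, as you yourself concede in its final paragraph, is a research program rather than a proof, so it cannot settle the statement either; but beyond incompleteness it contains concrete defects. The central structural assumption --- that every orbit with $x^0\neq C$ accumulates on one and the same $P$-invariant closed curve $\Gamma\subset intS^2$, reducing the problem to a circle map --- is contradicted by the paper's own numerical evidence: for $\alpha=0.4995,\,0.4999,\,0.5005,\,0.5001$ the observed attractor is a \emph{disconnected} set with exactly six connected components, and its topology visibly changes as $\alpha$ moves from near $0$ or $1$ (boundary-hugging spirals) toward $\frac{1}{2}$. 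A proof meant to cover all $\alpha\neq\frac{1}{2}$ cannot start from a fixed circle picture; at best one could hope for a cycle of six compact pieces permuted by the dynamics (compare the six sectors $S_1,\dots,S_6$ cycled by $V_{1/2}$), which is a different and harder reduction than the one you build on.

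Two individual steps also fail as stated. First, the commutation $P\circ V_\alpha=V_\alpha\circ P$ yields only that $\lim_n Ces_1^{(n)}\left(Px,V_\alpha\right)=P\lim_n Ces_1^{(n)}\left(x,V_\alpha\right)$; it relates limits at \emph{different} initial points and does not make the limit at a fixed $x$ $P$-invariant, so it does not force that limit to equal $C$. Nor can you shortcut via fixed points: for nonlinear maps the Cesàro limit of a trajectory need not belong to $Fix(V_\alpha)$ (that implication is a linear, mean-ergodic fact), which is precisely why \cite{Z} and \cite{GZ} prove divergence of the averages directly through dwell-time estimates --- the quantitative input your plan postpones. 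Second, your rotation-number dichotomy presupposes that $V_\alpha$ restricted to its attractor is an orientation-preserving homeomorphism of a circle, and it leans on the assertion that $V_\alpha$ has no periodic points; the paper itself invokes that assertion inside the proof of the omega-limit theorem without justification, so your argument would need to prove it rather than inherit it. In short, you have correctly located where the difficulty lives (non-uniform recurrence on an interior attractor of unknown topology), but the attractor's structure, the periodic-point question, and the dwell-time asymptotics all remain open --- which is exactly why the statement stands in the paper as a conjecture.
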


Now, we shall study the case $\alpha=\frac{1}{2}.$ The operator $V_{1/2}:S^2\to S^2$ takes the following form
\begin{eqnarray}\label{V_12}
V_{1/2}:
\begin{cases}
x'_1=\frac{1}{2}x_1^2+2x_1x_2+\frac{1}{2} x_2^2\\
x'_2=\frac{1}{2}x_2^2+2x_2x_3+\frac{1}{2} x_3^2\\
x'_3=\frac{1}{2}x_3^2+2x_3x_1+\frac{1}{2} x_1^2
\end{cases}
\end{eqnarray}

In this case, the fixed point $C=\left(\frac{1}{3},\frac{1}{3},\frac{1}{3}\right)$ is non-hyperbolic and the spectrum of the Jacobian of the operator $V_{1/2}$ at the fixed point $C$ is $Sp(J(C))=\left\{2,\frac{1\pm \sqrt{3}i}{2}\right\}$.  

Let us define the following sets
\begin{eqnarray*}
l_1=\{x\in S^2: x_2=x_3\}, \ l_2=\{x\in S^2: x_1=x_3\}, \ l_3=\{x\in S^2: x_1=x_2\},\\
S_1=\{x\in S^2: x_1\geq x_2\geq x_3\}, \quad S_2=\{x\in S^2: x_1\geq x_3\geq x_2\},\\ 
S_3=\{x\in S^2: x_3\geq x_1\geq x_2\}, \quad S_4=\{x\in S^2: x_3\geq x_2\geq x_1\},\\
S_5=\{x\in S^2: x_2\geq x_3\geq x_1\}, \quad S_6=\{x\in S^2: x_2\geq x_1\geq x_3\}.
\end{eqnarray*}

\begin{proposition}
We have the following cycles:
\begin{itemize}
\item[(i)] $l_1\overset{V_{1/2}}{\longrightarrow} l_2\overset{V_{1/2}}{\longrightarrow} l_3\overset{V_{1/2}}{\longrightarrow} l_1;$
\item[(ii)] $S_1\overset{V_{1/2}}{\longrightarrow} S_2\overset{V_{1/2}}{\longrightarrow} S_3\overset{V_{1/2}}{\longrightarrow} S_4\overset{V_{1/2}}{\longrightarrow} S_5 \overset{V_{1/2}}{\longrightarrow} S_6\overset{V_{1/2}}{\longrightarrow} S_1;$
\end{itemize}
\end{proposition}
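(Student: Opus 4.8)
The plan is to reduce both cycles to a single family of factorization identities for the pairwise differences of the image coordinates. First I would compute, directly from \eqref{V_12}, the three differences
\begin{align*}
x_1' - x_2' &= (x_1 - x_3)\Bigl(\tfrac{x_1+x_3}{2} + 2x_2\Bigr),\\
x_2' - x_3' &= (x_2 - x_1)\Bigl(\tfrac{x_2+x_1}{2} + 2x_3\Bigr),\\
x_3' - x_1' &= (x_3 - x_2)\Bigl(\tfrac{x_3+x_2}{2} + 2x_1\Bigr).
\end{align*}
Each difference factors as the corresponding difference of \emph{input} coordinates times a cofactor that is a nonnegative combination of the $x_i$. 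Since the cofactor $\tfrac{x_1+x_3}{2}+2x_2$ vanishes only when $x_1=x_2=x_3=0$, which is impossible on $S^2$, every cofactor is strictly positive on the whole simplex. Hence $\mathrm{sign}(x_1'-x_2')=\mathrm{sign}(x_1-x_3)$, $\mathrm{sign}(x_2'-x_3')=\mathrm{sign}(x_2-x_1)$ and $\mathrm{sign}(x_3'-x_1')=\mathrm{sign}(x_3-x_2)$, where equality on one side forces equality on the other. Only the first identity needs to be checked by hand; the other two follow from it by the cyclic symmetry $P\circ V_{1/2}=V_{1/2}\circ P$ recorded in Proposition \ref{permutationandV}(i).

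Cycle (i) is then immediate. If $x\in l_1$, i.e. $x_2=x_3$, the third identity gives $x_3'-x_1'=0$, so $V_{1/2}(x)\in l_2$ and therefore $V_{1/2}(l_1)\subseteq l_2$. The inclusions $V_{1/2}(l_2)\subseteq l_3$ and $V_{1/2}(l_3)\subseteq l_1$ come from the first and second identities in exactly the same way; alternatively one transports the first inclusion through $P$, using $P(l_2)=l_1$, $P(l_3)=l_2$ and commutativity.

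Cycle (ii) is a finite bookkeeping step resting on the same three sign relations. For each sextant I would read off the three input comparisons $x_1$ vs $x_3$, $x_2$ vs $x_1$, $x_3$ vs $x_2$, convert them into the output comparisons $x_1'$ vs $x_2'$, $x_2'$ vs $x_3'$, $x_3'$ vs $x_1'$, and assemble the resulting total order on $x'$. For instance, on $S_1=\{x_1\geq x_2\geq x_3\}$ one has $x_1\geq x_3$, $x_1\geq x_2$ and $x_2\geq x_3$, giving $x_1'\geq x_2'$, $x_3'\geq x_2'$ and $x_1'\geq x_3'$, i.e. $x_1'\geq x_3'\geq x_2'$, which is precisely $S_2$. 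Running the remaining five sextants through the same dictionary yields $S_2\to S_3\to S_4\to S_5\to S_6\to S_1$, closing the cycle. I expect no genuine obstacle here: the only point requiring care is the strict positivity of the cofactors, so that the boundary lines $l_i$ map consistently and no ordering collapses on the degenerate locus, and this was already secured in the first step. Everything else is the mechanical six-case verification, which the cyclic symmetry reduces to checking one case and rotating.
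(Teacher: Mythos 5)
Your proposal is correct and follows essentially the same route as the paper: the factorization identities you derive are exactly the paper's equation \eqref{mainequality}, since on $S^2$ your cofactor $\tfrac{x_1+x_3}{2}+2x_2$ equals $\tfrac{1+3x_2}{2}$ (and cyclically), and the paper likewise deduces both cycles directly from these identities. The only difference is that you spell out the sign bookkeeping for the six sectors, which the paper leaves implicit.
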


\begin{proof}
Let $V_{1/2}$ be an operator given by \eqref{V_12}. One can easily check that
\begin{eqnarray}\label{mainequality}
\begin{array}{c}
x'_1-x'_2=(x_1-x_3)\cfrac{1+3x_2}{2}\\
x'_1-x'_3=(x_2-x_3)\cfrac{1+3x_1}{2}\\
x'_2-x'_3=(x_2-x_1)\cfrac{1+3x_3}{2}
\end{array}
\end{eqnarray}
The proof the proposition follows from the above equality.
\end{proof}

\begin{theorem}
Let $V_{1/2} :S^2\to S^2$ be a quadratic stochastic operator given by \eqref{V_12}. The following statements hold true:
\begin{itemize}
\item[(i)] $\phi(x)=|x_1-x_2||x_1-x_3||x_2-x_3|$ is a Lyapunov function;
\item[(ii)] The trajectory always converges to the fixed point $C=\left(\frac{1}{3},\frac{1}{3},\frac{1}{3}\right)$. 
\end{itemize}
\end{theorem}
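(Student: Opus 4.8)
The plan is to treat (i) and (ii) together, leaning on the coordinate–difference identities \eqref{mainequality} established just above. For part (i), I would first take absolute values in each line of \eqref{mainequality} and multiply the three resulting relations. Since $x_i\ge 0$ forces $1+3x_i>0$, the absolute values pass through the factors $\tfrac{1+3x_k}{2}$ cleanly and one obtains the multiplicative law
\begin{equation*}
\phi\bigl(V_{1/2}(x)\bigr)=\phi(x)\cdot\frac{(1+3x_1)(1+3x_2)(1+3x_3)}{8},
\end{equation*}
after recognizing that the product of the three factors $|x_i-x_j|$ reconstitutes $\phi(x)$. Everything then reduces to bounding $f(x):=(1+3x_1)(1+3x_2)(1+3x_3)$ on $S^2$. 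Expanding and using $x_1+x_2+x_3=1$ gives $f(x)=4+9p+27q$ with $p=x_1x_2+x_1x_3+x_2x_3$ and $q=x_1x_2x_3$, so the inequality $f(x)\le 8$ is equivalent to $p+3q\le\tfrac49$. I would prove this by maximizing $G(x)=p+3q$ over $S^2$: the interior Lagrange conditions factor as $(x_i-x_j)(1+3x_k)=0$, which forces $x_1=x_2=x_3$, so the unique interior critical point is $C$ with $G(C)=\tfrac49$, while on each face $x_k=0$ one has $G=x_ix_j\le\tfrac14<\tfrac49$. Hence $f(x)\le 8$ with equality only at $C$, and therefore $\phi(V_{1/2}(x))\le\phi(x)$ with equality if and only if $\phi(x)=0$; this is precisely the statement that $\phi$ is a Lyapunov function.

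For part (ii), the natural tool is LaSalle's invariance principle on the compact invariant set $S^2$. The equality set is $E=\{x:\phi(V_{1/2}(x))=\phi(x)\}=\{\phi=0\}=l_1\cup l_2\cup l_3$, so every trajectory converges to the largest invariant subset $M\subseteq E$, and it remains only to identify $M$. Here I would exploit the cyclic action $l_1\to l_2\to l_3\to l_1$: writing a point of a line through its repeated coordinate $r\in[0,\tfrac12]$ (for instance $(1-2r,r,r)$ on $l_1$), a direct substitution into \eqref{V_12} shows that one step of $V_{1/2}$ sends the repeated value to $g(r):=\tfrac12-\tfrac32 r^2$, independently of which line one starts on. Thus the restricted dynamics on $\{\phi=0\}$ is completely encoded by the one-dimensional map $g$, whose only fixed point in $[0,\tfrac12]$ is $r=\tfrac13$, corresponding exactly to $C$.

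The crux is to show that $g^n(r)\to\tfrac13$ for every $r\in[0,\tfrac12]$, which is delicate precisely because $C$ is non-hyperbolic (indeed $g'(\tfrac13)=-1$), so linearization is inconclusive. My plan is to pass to $g^2$, which is monotone increasing, and to rule out period-two points: the equation $g(g(r))=r$ reduces to $27r^4-18r^2+8r-1=0$, which factors as $(r+1)(3r-1)^3=0$. Hence $\tfrac13$ is the unique fixed point of $g^2$ in $[0,\tfrac12]$ and there are no genuine $2$-cycles; moreover the sign of $(r+1)(3r-1)^3$ gives $g^2(r)>r$ for $r<\tfrac13$ and $g^2(r)<r$ for $r>\tfrac13$. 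A monotone increasing interval map with a single fixed point and this sign pattern forces every orbit to converge monotonically to $\tfrac13$, whence $g^n(r)\to\tfrac13$ for all $r$. Consequently the only invariant subset of $\{\phi=0\}$ is $\{C\}$, so $M=\{C\}$, and together with LaSalle this shows every trajectory converges to $C$. The main obstacle is exactly this last non-hyperbolic convergence step, and the factorization $(r+1)(3r-1)^3$ — exhibiting the absence of $2$-cycles — is what makes the monotonicity argument go through in the absence of a spectral gap.
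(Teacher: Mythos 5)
Your proposal is correct, but it takes a genuinely different route from the paper on both halves, and in one place it is actually more complete. For (i), both arguments start from the multiplicative law $\phi(V_{1/2}(x))=\phi(x)\prod_{i=1}^{3}\frac{1+3x_i}{2}$ derived from \eqref{mainequality}; the paper then finishes in one line with the AM--GM inequality, since $\sum_i\frac{1+3x_i}{2}=3$ forces the product to be at most $1$, whereas you expand into elementary symmetric functions and run a Lagrange-multiplier maximization of $p+3q$ over $S^2$. Your version is heavier but buys the explicit equality characterization ($f(x)=8$ only at $C$), which the AM--GM route also gives but which you then genuinely use. For (ii), the divergence is more substantial: the paper shows $\lambda=\lim_n\phi(V^{(n)}_{1/2}(x))=0$ by a compactness/contradiction argument (if the closure of the orbit avoided $l_1\cup l_2\cup l_3$, the product factor would stay below $1-\varepsilon_0$ while the ratio $\phi^{(n+1)}/\phi^{(n)}$ tends to $1$), and then concludes tersely from \eqref{mainequality} that the maximal pairwise difference tends to $0$ — a step the paper does not detail, and which is not automatic, since a priori the orbit could shadow points on the lines $l_i$ far from $C$ where $\phi$ already vanishes. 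Your LaSalle argument fills exactly this gap: the reduction of the dynamics on $\{\phi=0\}$ to the one-dimensional map $g(r)=\tfrac12-\tfrac32 r^2$ on $[0,\tfrac12]$ is correct on all three lines (I verified the computation, including the cycling $l_1\to l_2\to l_3$ consistent with the paper's cycle proposition), the factorization $27r^4-18r^2+8r-1=(r+1)(3r-1)^3$ checks out, and since $g^2$ is increasing (indeed $(g^2)'(r)=9rg(r)\ge 0$) with the sign pattern you state, every $g$-orbit converges to $\tfrac13$ despite $g'(\tfrac13)=-1$. One small point worth making explicit: in the LaSalle step, invariance must be taken in the two-sided sense $V(\omega)=\omega$ (which holds for omega-limit sets on the compact $S^2$), because $E=l_1\cup l_2\cup l_3$ is itself \emph{forward} invariant, so "largest forward-invariant subset" would be vacuous; with two-sided invariance, the maximal invariant interval $[a,b]$ for the decreasing map $g$ satisfies $g(a)=b$, $g(b)=a$, and your no-$2$-cycle factorization is precisely what collapses it to $\{\tfrac13\}$, i.e., $M=\{C\}$. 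So your argument is sound and, for the convergence-on-the-boundary-set step, supplies rigor where the paper only sketches.
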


\begin{proof}
(i). Let $V_{1/2}$ be an operator given by \eqref{V_12}. It follows from \eqref{mainequality} that
$$
\phi(V_{1/2}(x))=\phi(x)\frac{1+3x_1}{2}\frac{1+3x_2}{2}\frac{1+3x_3}{2}.
$$
On the other hand, we have that
$$
\frac{1+3x_1}{2}\frac{1+3x_2}{2}\frac{1+3x_3}{2}\leq \left(\frac{\frac{1+3x_1}{2}+\frac{1+3x_2}{2}+\frac{1+3x_3}{2}}{3}\right)^3=1.
$$
Therefore, one has that $\phi(V(x))\leq \phi(x)$ for any $x\in S^2$. This means that $\phi$ is decreasing a long the trajectory of $V_{1/2}$. Consequently, $\phi$ is a Lyapunov function.

(ii). We know that $\left\{\phi\left(V^{(n)}_{1/2}(x)\right)\right\}_{n=1}^\infty$ is a decreasing bounded sequence. Therefore, the limit $\lim\limits_{n\to\infty}\phi\left(V^{(n)}_{1/2}(x)\right)=\lambda$ exists. We want to show that $\lambda=0$. Suppose that $\lambda\neq 0$. It means that $\overline{\left\{V^{(n)}_{1/2}(x)\right\}_{n=1}^\infty}\subset S^2\setminus\{l_1\cup l_2\cup l_3\}$. Since $\lambda \neq 0$, we get that 
\begin{eqnarray*}
1=\lim\limits_{n\to\infty}\frac{\phi\left(V^{(n+1)}_{1/2}(x)\right)}{\phi\left(V^{(n)}_{1/2}(x)\right)}=\lim\limits_{n\to\infty}\left(\frac{1+3x_1^{(n)}}{2}\frac{1+3x_2^{(n)}}{2}\frac{1+3x_3^{(n)}}{2}\right)
\end{eqnarray*}

On the other hand, since $\overline{\left\{V^{(n)}_{1/2}(x)\right\}_{n=1}^\infty}\subset S^2\setminus\{l_1\cup l_2\cup l_3\}$, there exists $\varepsilon_0$ such that for any $n$
one has that 
$$
\frac{1+3x_1^{(n)}}{2}\frac{1+3x_2^{(n)}}{2}\frac{1+3x_3^{(n)}}{2}<1-\varepsilon_0.
$$

This is a contradiction. It shows that $\lambda=0$. 

Therefore, $\omega(x^0)\subset l_1\cup l_2\cup l_3.$ We want to show that $\omega(x^0)= l_1\cap l_2\cap l_3$.

We know that $\left|x_1^{(n)}-x_2^{(n)}\right|\left|x_1^{(n)}-x_3^{(n)}\right|\left|x_2^{(n)}-x_3^{(n)}\right|\overset{n\to\infty}{\longrightarrow}0$. It follows from \eqref{mainequality} that 
$$
\max\left\{\left|x_1^{(n)}-x_2^{(n)}\right|,\left|x_1^{(n)}-x_3^{(n)}\right|,\left|x_2^{(n)}-x_3^{(n)}\right|\right\}\overset{n\to\infty}{\longrightarrow}0.
$$
This means that $\left(x_1^{(n)},x_2^{(n)},x_3^{(n)}\right)\overset{n\to\infty}{\longrightarrow} \left(\frac{1}{3},\frac{1}{3},\frac{1}{3}\right)$. This completes the proof.
\end{proof}

\subsection{Numerical Results on Dynamics of $V_\alpha$}
We are going to present some pictures of attractors (an omega limiting set) of the operator $V_\alpha:S^2\to S^2$ given by \eqref{V_alpha}.
 
In the cases $\alpha=0$ and $\alpha=1$, the operators $V_0$ and $V_1$ have similar behaviors. The trajectories of both operators, $V_0$ and $V_1$, look as spirals, but one of them is moving by clockwise and another one is moving by anticlockwise. In these cases, we have that  $\omega_{V_0}(x^0)\subset \partial S^2$ and $\omega_{V_1}(x^0)\subset \partial S^2$.

We are interested in the dynamics of the mutation operator $V_\alpha$ while $\alpha$ approaches to $\frac{1}{2}$ from both left and right sides. In order to see some symmetry, we shall provide attractors of $V_\alpha$ and $V_{1-\alpha}$ at the same time.      
 
If we slightly change $\alpha$ from $0$ and $1$, then we can see that the omega limiting set splits from the boundary. Moreover, in the picture, we can see \textbf{"three vertexes"} (it is roughly speaking) where the trajectory shall spend almost all its time around those points. Therefore, we are expecting non-ergodicity of the operator $V_\alpha$ if $\alpha$ is near by $0$ or $1$. 

If $\alpha$ becomes close to the $\frac{1}{2}$ then we can see some chaotic pictures. We observe from the pictures that, in the cases $\alpha$ and $1-\alpha$, the attractors are the same but different from the orientations. One of them is moving by clockwise and another one is by anticlockwise. If $\alpha$ is enough close to $\frac{1}{2}$ then we detect completely different pictures. 
Here are some pictures for the values of $\alpha= 0.4995, 0.4999, 0.5005, 0.5001$. In this cases, attractors are disconnected sets and they consist of 6 connected sets. It is very surprising that why the number of connected sets are 6 and why not 5 or 7.

\begin{figure}[h!]\label{Figalpha0109}
\begin{center}
\includegraphics[height=4cm,width=4cm]{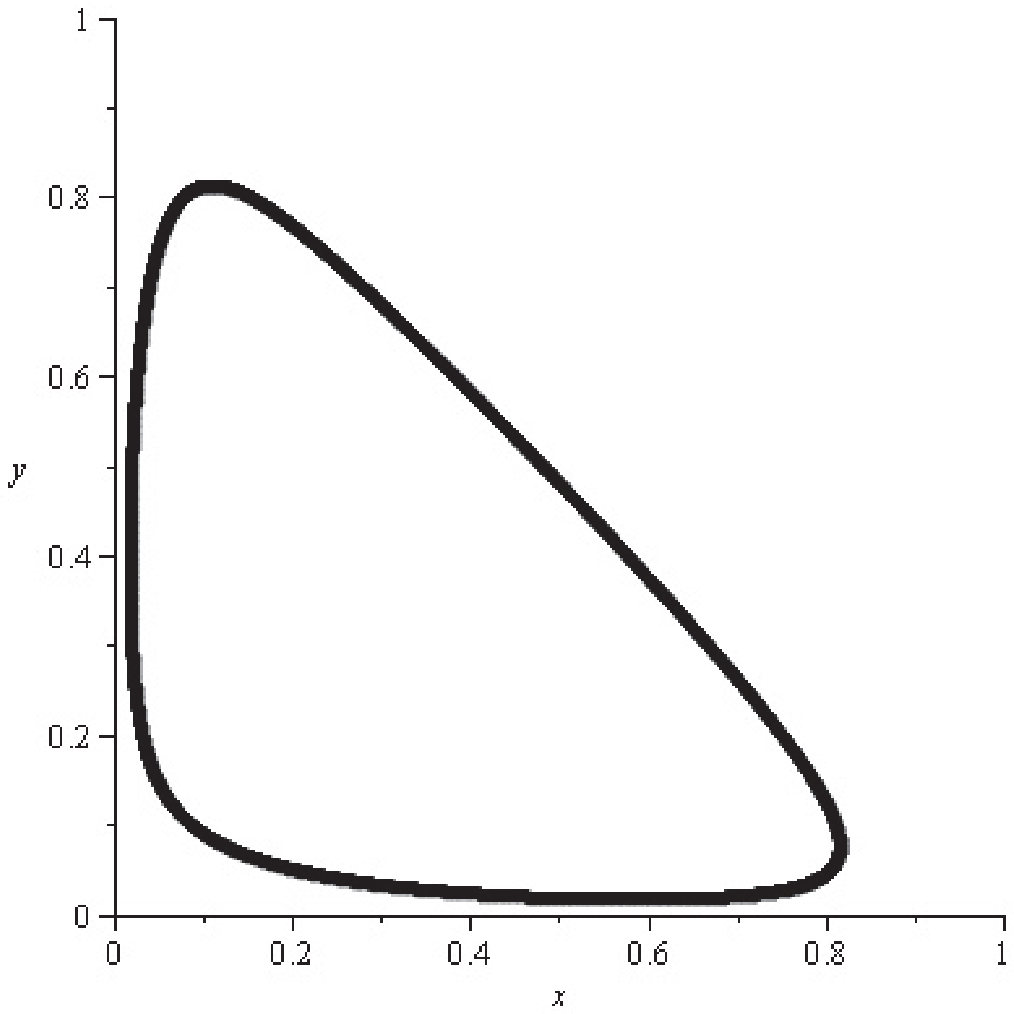} \quad \quad \quad \quad \quad 
\includegraphics[height=4cm,width=4cm]{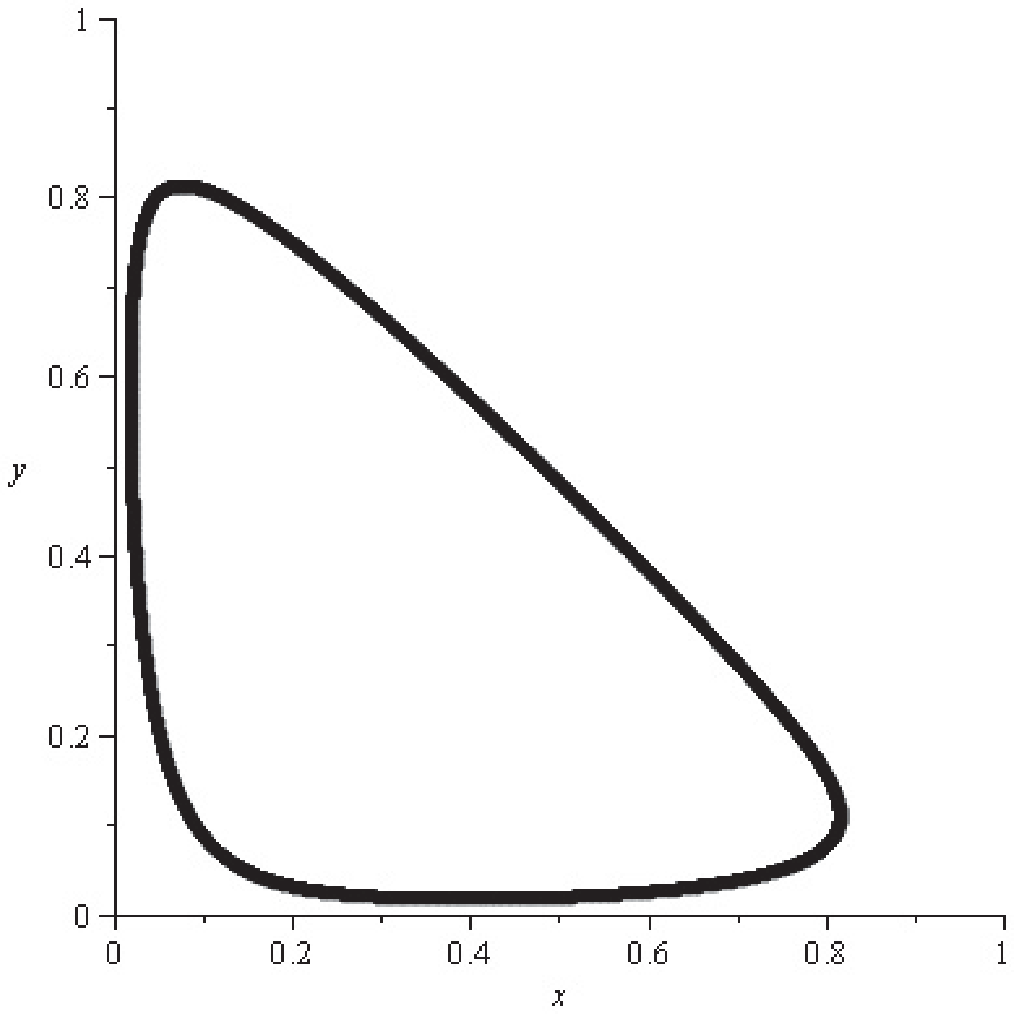}
\caption{Attractors of $V_\alpha$: $\alpha=0.1$ and $\alpha=0.9$}
\end{center}
\end{figure}

\begin{figure}[h!]\label{Figalpha04970503}
\begin{center}
\includegraphics[height=4cm,width=4cm]{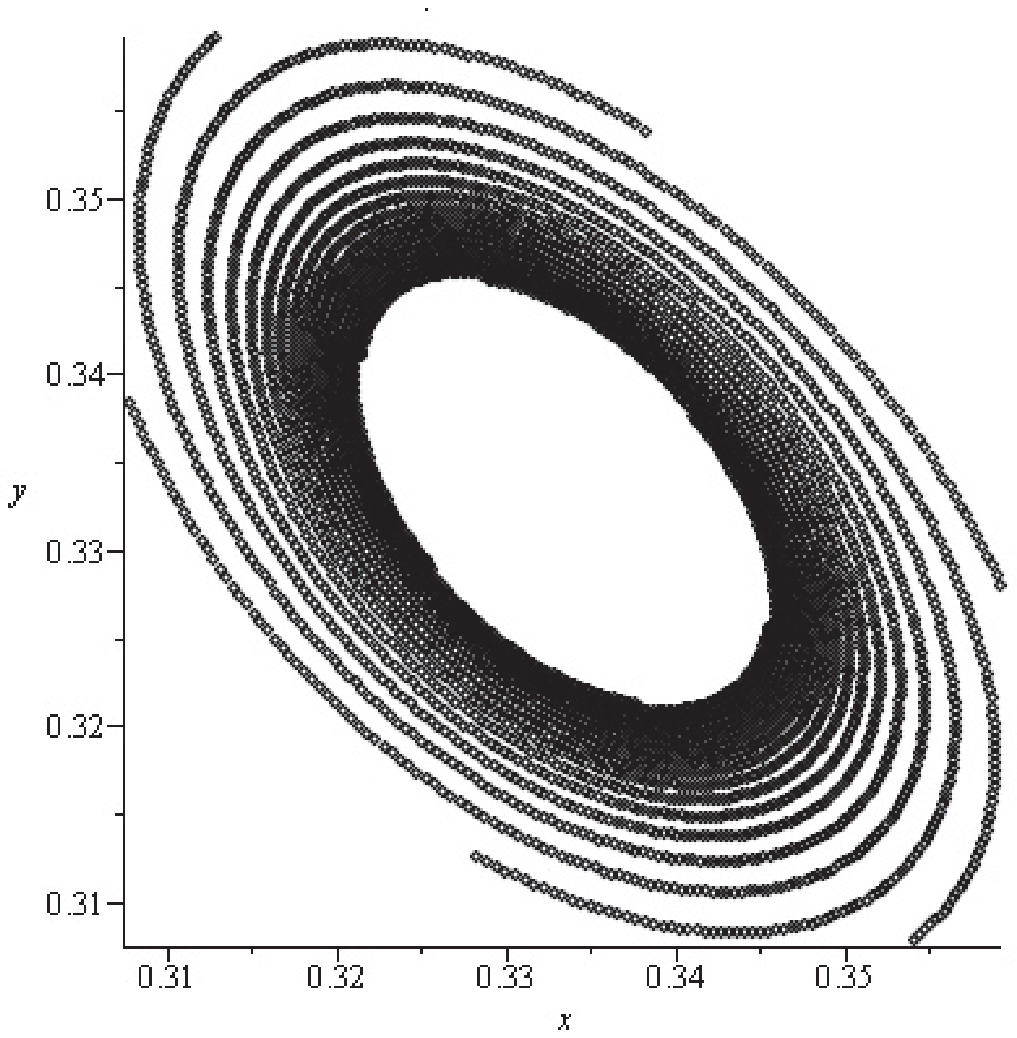} \quad \quad \quad \quad \quad 
\includegraphics[height=4cm,width=4cm]{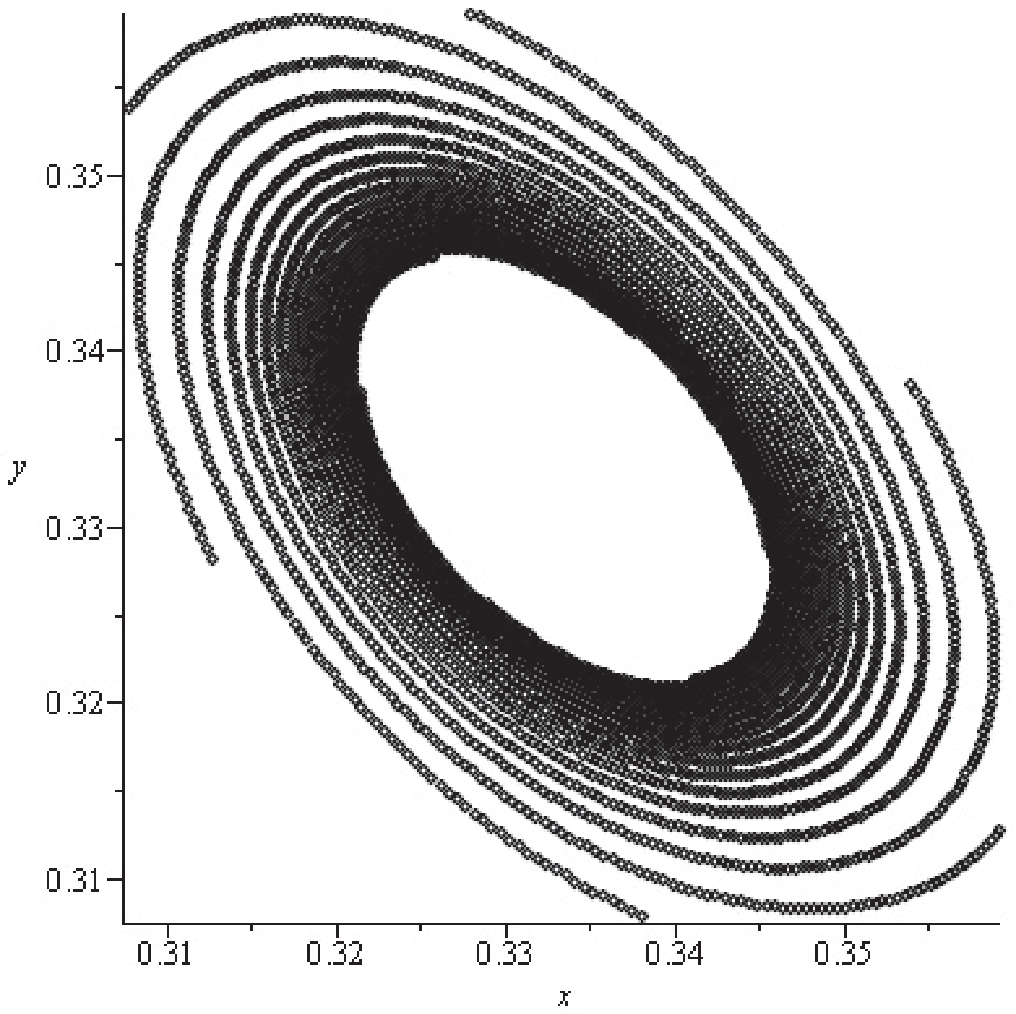}
\caption{Attractors of $V_\alpha$: $\alpha=0.497$ and $\alpha=0.503$}
\end{center}
\end{figure}

\begin{figure}[h!]\label{Figalpha04990501}
\begin{center}
\includegraphics[height=4cm,width=4cm]{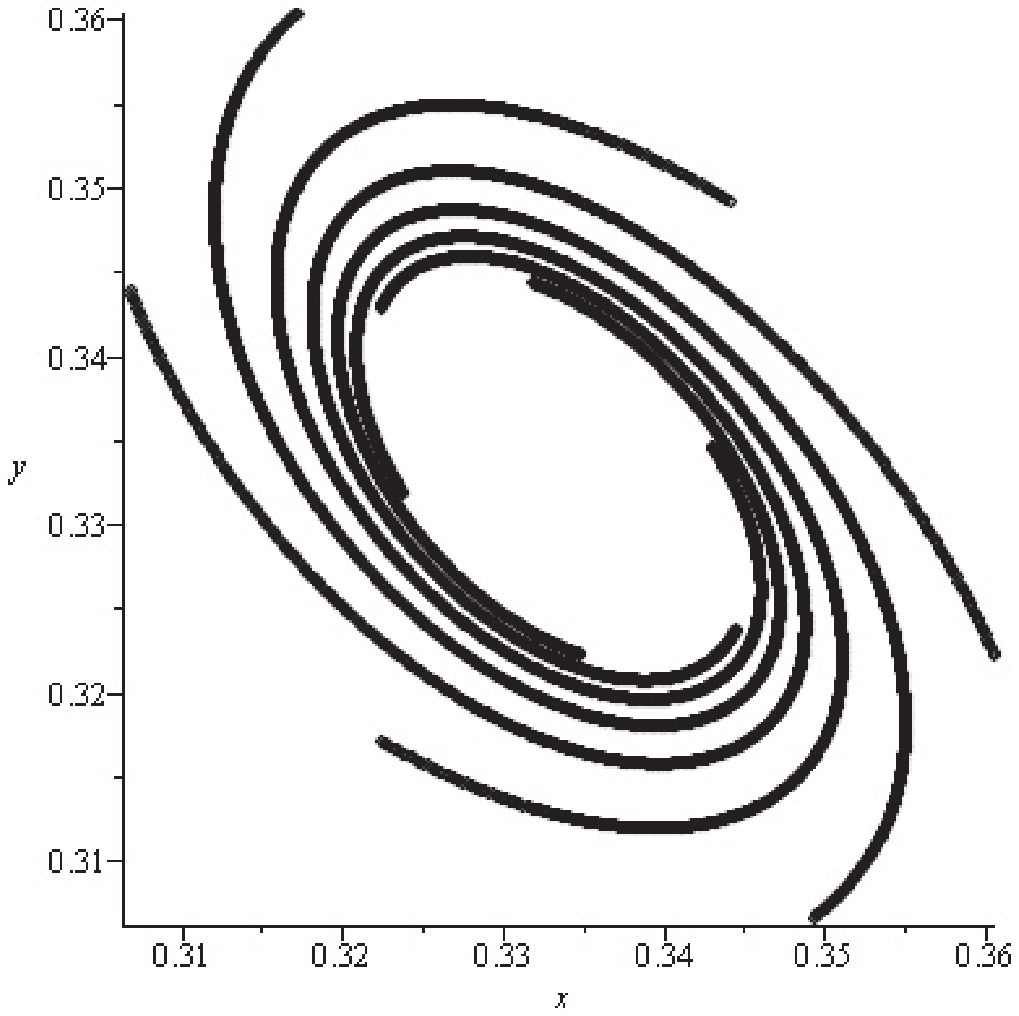} \quad \quad \quad \quad \quad 
\includegraphics[height=4cm,width=4cm]{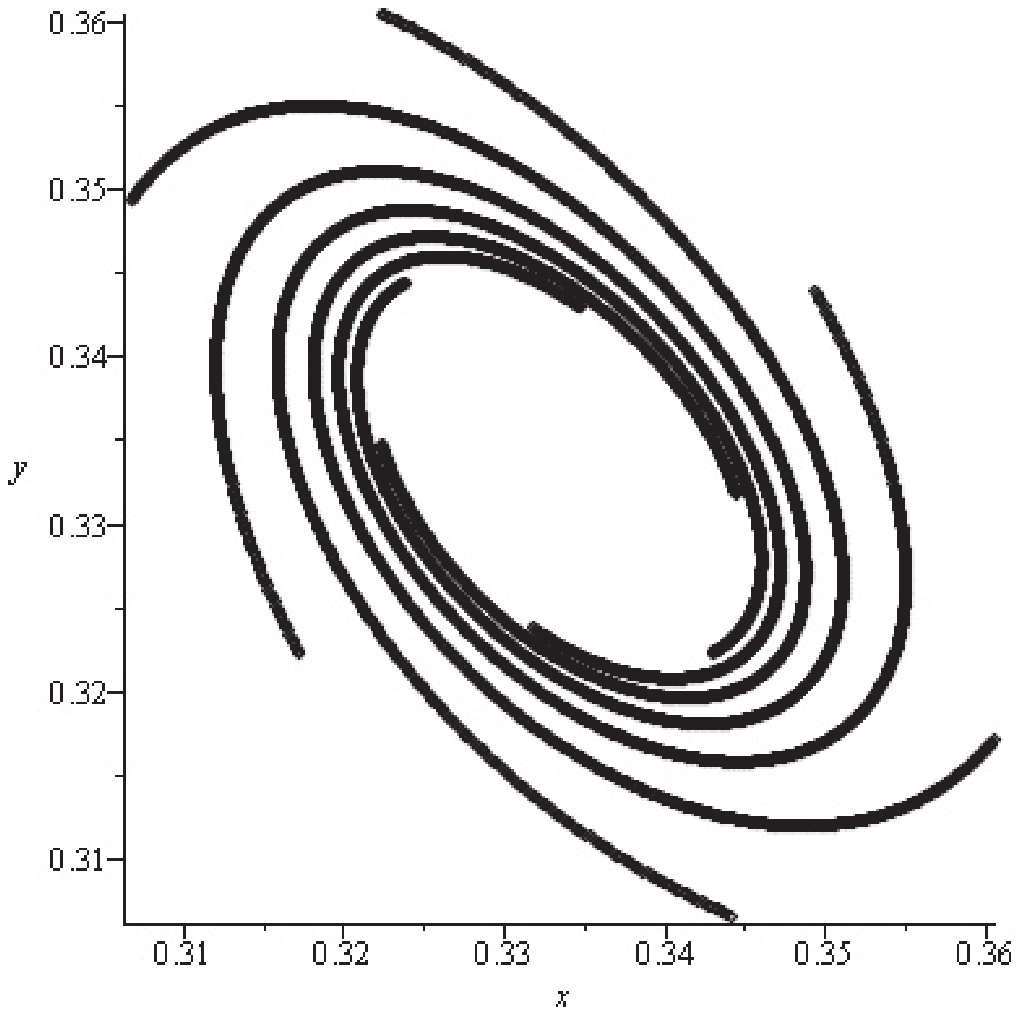}
\caption{Attractors of $V_\alpha$: $\alpha=0.499$ and $\alpha=0.501$}
\end{center}
\end{figure}

\begin{figure}[h!]\label{Figalpha0499505005}
\begin{center}
\includegraphics[height=4cm,width=4cm]{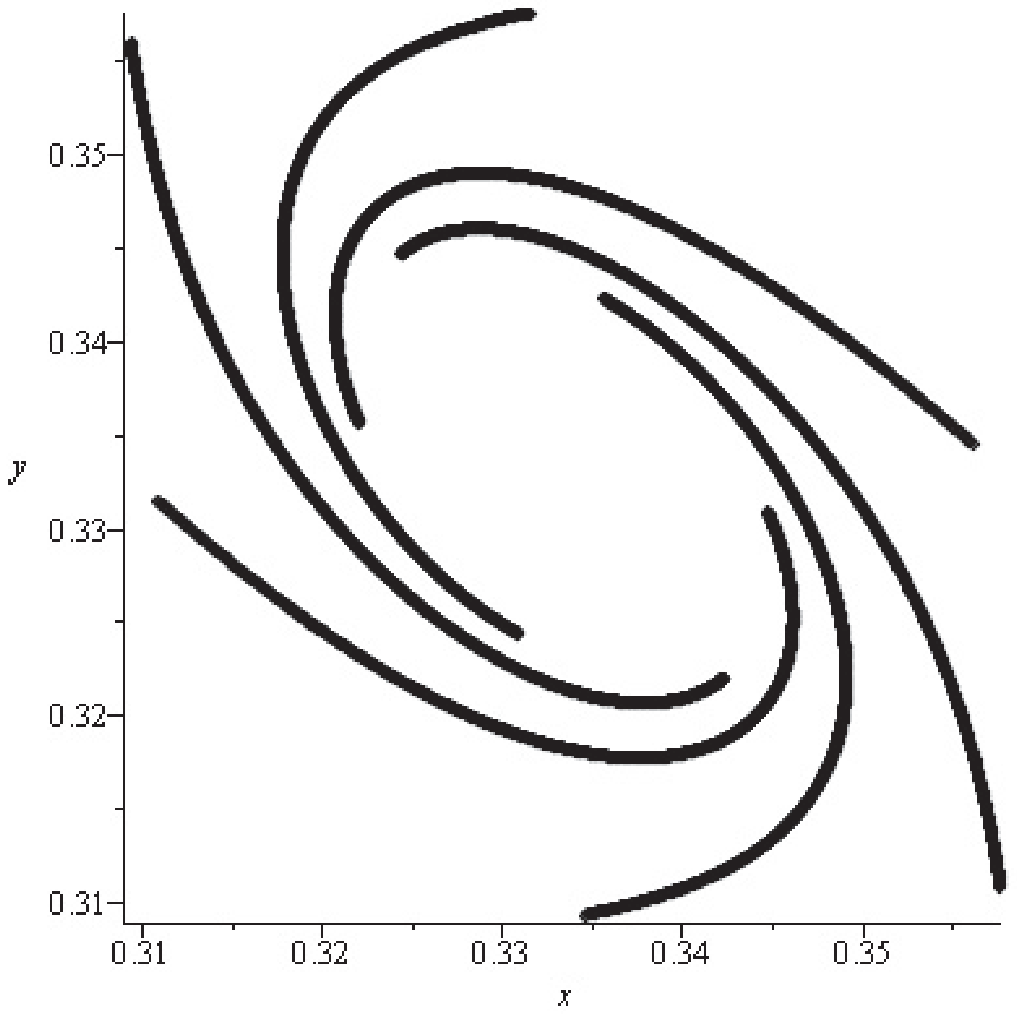} \quad \quad \quad \quad \quad 
\includegraphics[height=4cm,width=4cm]{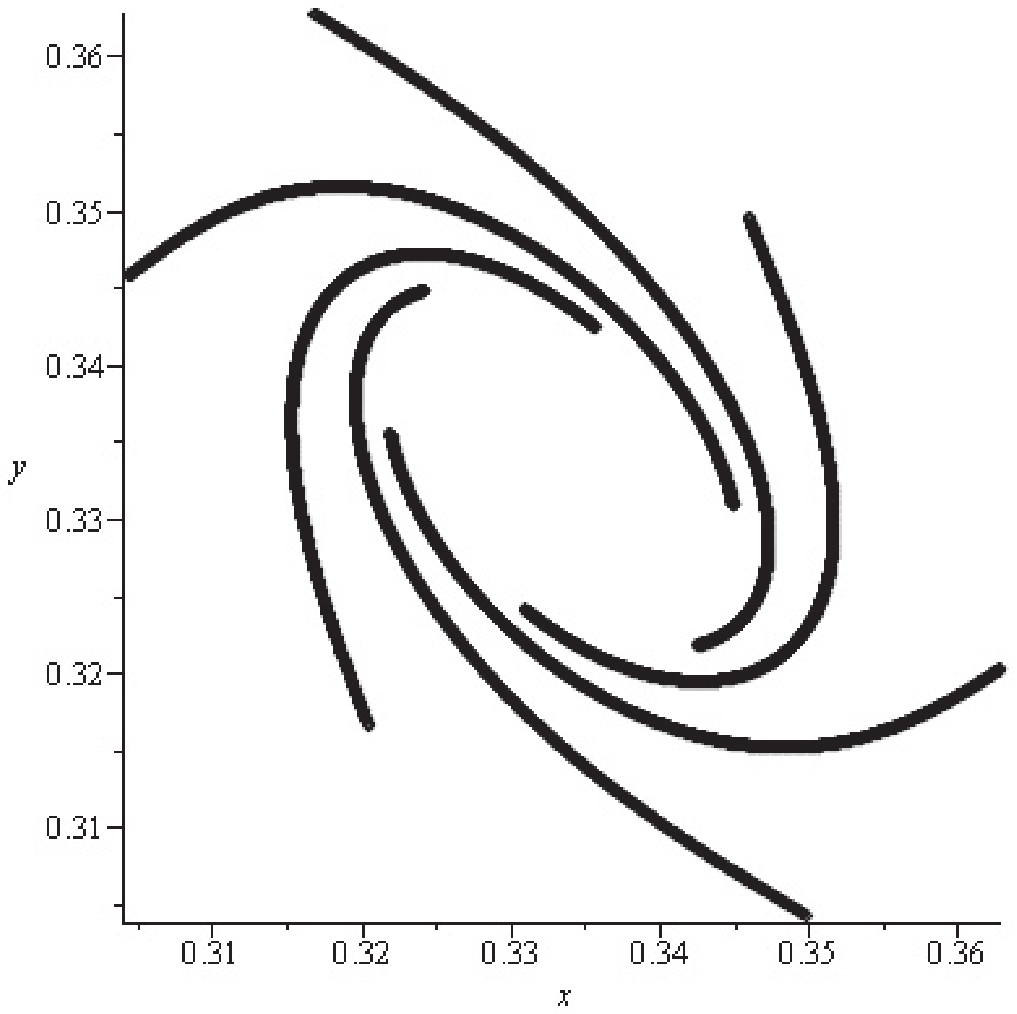}
\caption{Attractors of $V_\alpha$: $\alpha=0.4995$ and $\alpha=0.5005$}
\end{center}
\end{figure}

\begin{figure}[h!]\label{Figalpha0499905001}
\begin{center}
\includegraphics[height=4cm,width=4cm]{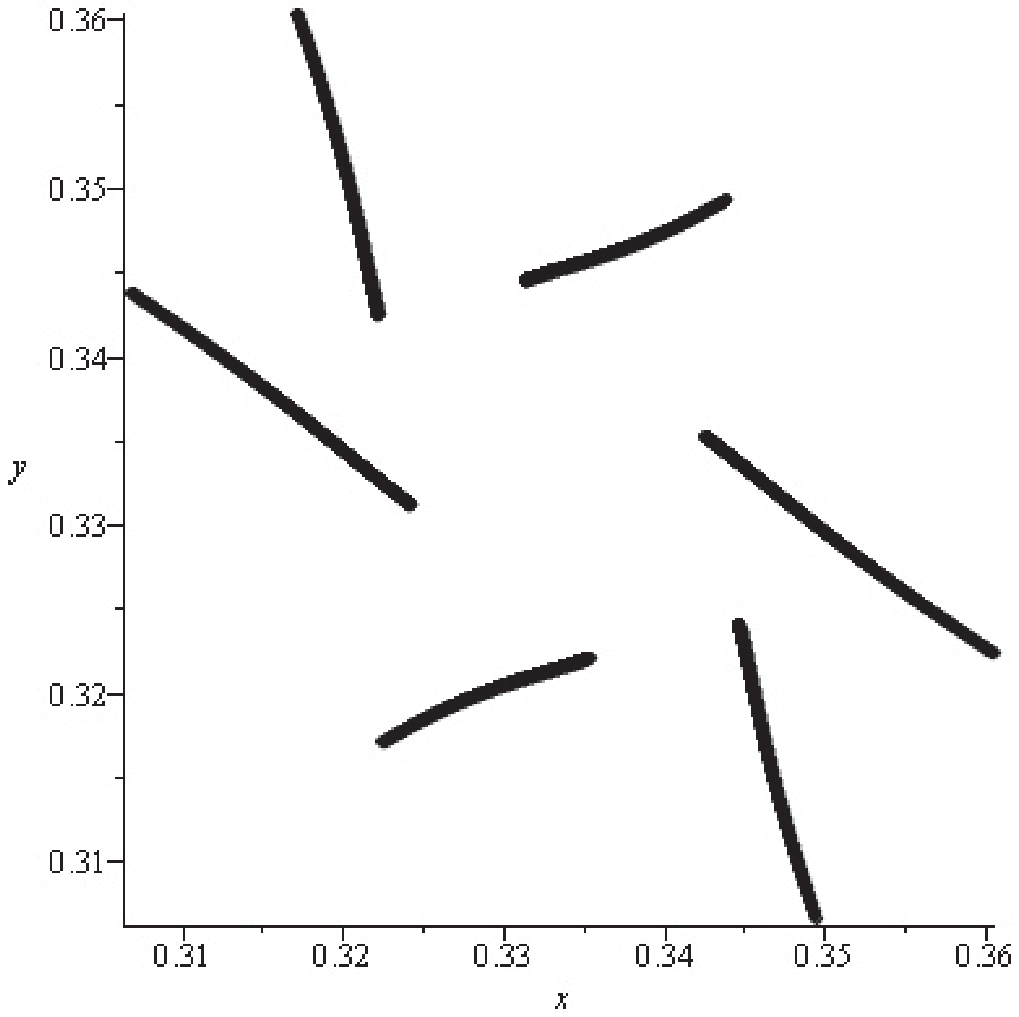} \quad \quad \quad \quad \quad 
\includegraphics[height=4cm,width=4cm]{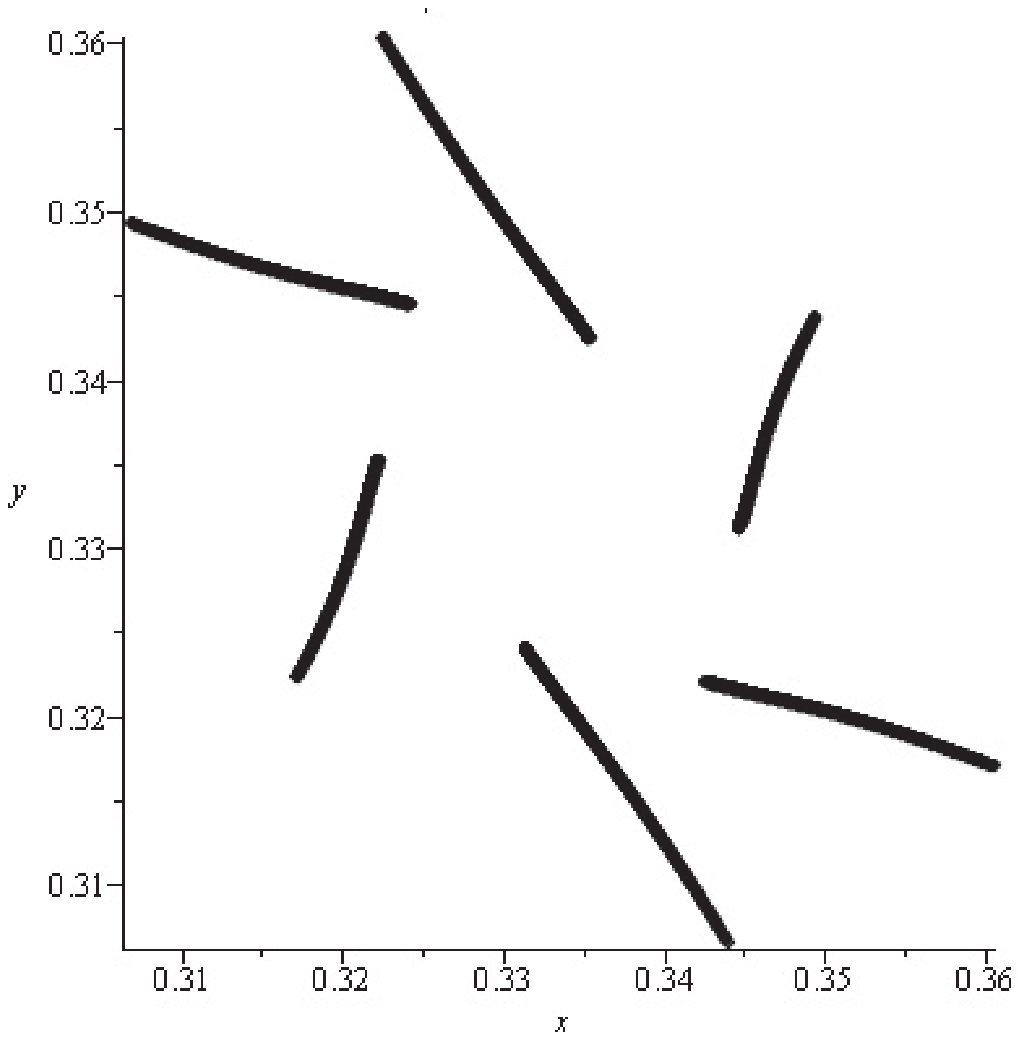}
\caption{Attractors of $V_\alpha$: $\alpha=0.4999$ and $\alpha=0.5001$}
\end{center}
\end{figure}  
   
For the operator \eqref{V_alpha}, the fluctuation point is $\alpha=\frac{1}{2}$. In this case, the influence of the chaotic operators $V_0$ and $V_1$ are the same. Therefore, the operator $V_{0.5}$ becomes regular and ergodic. This completes the numerical study of the operator \eqref{V_alpha}. 

\subsection{Analytic Results on Dynamics of $W_\alpha$} We are aiming to present some analytic results on dynamics of $W_\alpha:S^2\to S^2$:
\begin{eqnarray}\label{W_alpha}
W_\alpha:
\begin{cases}
x'_1=(1-\alpha)x_1^2+2x_1x_2+\alpha x_3^2\\
x'_2=(1-\alpha)x_2^2+2x_2x_3+\alpha x_1^2\\
x'_3=(1-\alpha)x_3^2+2x_3x_1+\alpha x_2^2
\end{cases},
\end{eqnarray}
where $W_\alpha(x)=x'=(x'_1,x'_2,x'_3)$ and $0<\alpha<1$. As we already mentioned, this operator can be written in the following form: $W_\alpha=(1-\alpha)W_0+\alpha W_1$ for any $0<\alpha<1$, where
$$
W_0:
\begin{cases}
x'_1=x_1^2+2x_1x_2\\
x'_2=x_2^2+2x_2x_3\\
x'_3=x_3^2+2x_3x_1
\end{cases}, \quad \quad \quad 
W_1:
\begin{cases}
x'_1=x_2^2+2x_1x_2\\
x'_2=x_3^2+2x_2x_3\\
x'_3=x_1^2+2x_3x_1
\end{cases}.
$$

As we already stated, the operator $W_0=V_0$ is Zakharevich's operator \eqref{ExampleofZakharevich} and the operator $W_1$ is a permutation of the operator which was studied in \cite{GR1989}. By means of methods which were used in \cite{GR1989}, we can easily prove the following result.

\begin{proposition} Let $W_1:S^2\to S^2$ be a quadratic stochastic operator given by \eqref{W_alpha} with $\alpha=1$. Then the following statements hold true:
\begin{itemize}
\item[(i)] {The operator $W_1$ has a unique fixed point $C=(\frac{1}{3},\frac{1}{3},\frac{1}{3})$ which is attracting;}

\item[(ii)] The vertexes of the simplex $e_1,e_2,e_3$ are 3-periodic points;

\item[(iii)] {$\phi(x)=x_1^2+x_2^2+x_3^2-\frac{1}{3}$ is a Lyupanov function;}

\item[(iv)] {The operator $W_1$ is regular in $intS^2$.}
\end{itemize}
\end{proposition}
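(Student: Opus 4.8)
The plan is to let the quadratic form $\phi(x)=x_1^2+x_2^2+x_3^2-\frac13$ do essentially all the work, handling (ii) by a direct computation and then deriving (i), (iii) and (iv) from a single exact identity for $\phi\circ W_1$. For (ii) I would simply evaluate $W_1$ (given by \eqref{W_alpha} with $\alpha=1$) at the vertices; a one-line check gives the cycle $e_1\mapsto e_2\mapsto e_3\mapsto e_1$, so each $e_i$ is a $3$-periodic point that is not fixed.

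The heart of the argument is an exact formula for $\phi(W_1(x))$. Writing $q=x_1x_2+x_2x_3+x_3x_1$ and $r=x_1x_2x_3$, I would expand $\sum_i (x_i')^2$ and use $x_1+x_2+x_3=1$ to simplify the symmetric cross terms (in particular $x_1x_2x_3(x_1+x_2+x_3)=r$). The crucial feature is that all occurrences of $r$ cancel, leaving
\[
\sum_{i=1}^3 (x_i')^2 = 1-4q+6q^2, \qquad \text{hence} \qquad \phi(W_1(x))-\phi(x)=2q(3q-1).
\]
Since on $S^2$ one always has $0\le q\le \frac13$, with $q=\frac13$ exactly at $C$ and $q=0$ exactly at the vertices, the right-hand side is $\le 0$, and it is $<0$ on $intS^2\setminus\{C\}$. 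This establishes (iii): $\phi$ is nonnegative, vanishes only at $C$, and is strictly decreasing along any interior trajectory except at $C$.

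Item (i) then follows almost for free. A fixed point satisfies $\phi(W_1(x))=\phi(x)$, forcing $q\in\{0,\frac13\}$; the value $q=\frac13$ gives $x=C$, while $q=0$ gives a vertex, which is $3$-periodic and not fixed (no open edge or interior point other than $C$ has $q\in\{0,\frac13\}$). Hence $Fix(W_1)=\{C\}$. That $C$ is attracting I would read off the Jacobian $J(C)$, whose entries are all $\frac23$: it has rank one, with eigenvalue $2$ along $(1,1,1)$ (transverse to the simplex) and eigenvalue $0$ on the tangent space of $S^2$, so $C$ is (super-)attracting; alternatively, attractivity drops out of (iv).

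For (iv), note first that $W_1(intS^2)\subset intS^2$, and by (iii) the sequence $\phi(W_1^{(n)}(x))$ is nonincreasing and bounded below, hence converges to some $\lambda\ge 0$ with $\lambda\le\phi(x)<\frac23$ for $x\in intS^2$. Every $y\in\omega(x^0)$ satisfies $\phi(y)=\lambda$ and, by invariance of the $\omega$-limit set and continuity, $\phi(W_1(y))=\phi(y)$, so $q(y)\in\{0,\frac13\}$. Since $q(y)=0$ would make $y$ a vertex with $\phi(y)=\frac23>\lambda$, this is impossible; thus $q(y)=\frac13$, i.e.\ $y=C$ and $\lambda=0$. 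Therefore $\omega(x^0)=\{C\}$ and every interior trajectory converges to $C$, which is precisely regularity in $intS^2$. The main obstacle is the algebraic step yielding $\phi(W_1(x))-\phi(x)=2q(3q-1)$ — specifically spotting the cancellation of the $x_1x_2x_3$ terms and invoking the simplex bound $q\le\frac13$; the only other delicate point, excluding the boundary $3$-cycle from $\omega(x^0)$, is handled cleanly by the strict inequality $\phi<\frac23$ on $intS^2$.
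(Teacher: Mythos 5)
Your proof is correct, but note that the paper contains no proof of this proposition at all: it merely asserts the result ``by means of methods which were used in \cite{GR1989}.'' Your argument is a complete, self-contained implementation of exactly the strategy the statement hints at --- the Lyapunov function you analyze is the one named in item (iii) --- and your key identity checks out: with $q=x_1x_2+x_2x_3+x_3x_1$ and $r=x_1x_2x_3$, using $\sum_i x_i^4=1-4q+2q^2+4r$ and $\sum_{i<j}x_i^2x_j^2=q^2-2r$ on $S^2$, the $r$-terms $4r-8r+4r$ do cancel, yielding $\sum_i(x_i')^2=1-4q+6q^2$ and hence $\phi(W_1(x))-\phi(x)=2q(3q-1)\le 0$, with equality only at the vertices ($q=0$) and at $C$ ($q=\tfrac13$). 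Your LaSalle-type bootstrap in (iv) is the genuinely valuable part: excluding the vertex $3$-cycle from $\omega(x^0)$ via the strict bound $\phi<\tfrac23$ on $intS^2$ is precisely the step a vague ``Lyapunov function, hence regular'' claim would miss, since $\phi$ is constant (equal to $\tfrac23$) along that cycle. One point you implicitly resolved correctly: the paper's displayed decomposition $W_\alpha=(1-\alpha)W_0+\alpha W_1$ reprints $V_1$ as ``$W_1$'' (a typo); the map actually given by \eqref{W_alpha} at $\alpha=1$ is $x_1'=2x_1x_2+x_3^2$, $x_2'=2x_2x_3+x_1^2$, $x_3'=2x_3x_1+x_2^2$, which is what your computations (and your cycle $e_1\mapsto e_2\mapsto e_3\mapsto e_1$) use. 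Finally, your Jacobian claim at $C$ is right: all entries of $J(C)$ equal $\tfrac23$, so on the invariant tangent plane $\left\{v:\textstyle\sum_i v_i=0\right\}$ the derivative vanishes and $C$ is superattracting --- consistent with, and sharper than, the paper's later assertion that the fixed point of $W_\alpha$ is attracting for $1-\tfrac{\sqrt{3}}{2}<\alpha<1$.
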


By means of the same methods and techniques which are used for the operator $V_\alpha$, we can prove the following results

\begin{proposition}
Let $W_\alpha :S^2\to S^2$ be a quadratic stochastic operator given by \eqref{W_alpha}. Then it has a unique fixed point $C=(\frac{1}{3},\frac{1}{3},\frac{1}{3})$, i.e., $Fix(W_\alpha)=\{C\}$. Moreover, one has that:

\begin{itemize}
\item[(i)] If $0<\alpha<1-\frac{\sqrt{3}}{2}$ then the fixed point is repelling;

\item[(ii)] If  $1-\frac{\sqrt{3}}{2}<\alpha<1$ then the fixed point is attracting;

\item[(iii)] If $\alpha=1-\frac{\sqrt{3}}{2}$ then the fixed point is non-hyperbolic.
\end{itemize}

\end{proposition}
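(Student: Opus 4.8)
The plan is to mirror, almost verbatim, the argument already used for $V_\alpha$ in Proposition \ref{uniquefixedpoint}, and then to read off the local behaviour from the Jacobian at $C$. First I would check that $C=(\tfrac13,\tfrac13,\tfrac13)$ is a fixed point: substituting $x_1=x_2=x_3=\tfrac13$ into \eqref{W_alpha} gives $x_i'=\tfrac{(1-\alpha)+2+\alpha}{9}=\tfrac13$, so $C\in Fix(W_\alpha)$. The structural fact driving uniqueness is that $W_\alpha$ commutes with the cyclic permutation $P$; a direct substitution shows $P\circ W_\alpha=W_\alpha\circ P$, exactly as in Proposition \ref{permutationandV}(i). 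Hence the analogue of Proposition \ref{permutationandV}(iii) holds, and $|Fix(W_\alpha)|\equiv 1\ (\mathrm{mod}\ 3)$ whenever the fixed-point set is finite, since every fixed point other than $C$ lies in a $P$-orbit of size $3$.

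Next I would rule out boundary fixed points. On the face $x_3=0$ the third equation of \eqref{W_alpha} reduces to $x_3'=\alpha x_2^2$, which at a fixed point forces $x_2=0$ and hence leaves only the vertex $e_1$; but $W_\alpha(e_1)=(1-\alpha,\alpha,0)\neq e_1$ for $\alpha\in(0,1)$. By the $P$-symmetry the same conclusion holds on the other two faces, so $Fix(W_\alpha)\cap\partial S^2=\emptyset$. I would then verify non-degeneracy of every interior fixed point via the bordered determinant appearing before Proposition \ref{uniquefixedpoint}. Combining non-degeneracy with Theorem 8.1.4 of \cite{Lyu} (the number of fixed points is odd) and Corollary 8.1.7 of \cite{Lyu} (at most $4$), the admissible counts are $\{1,3\}$; the congruence $|Fix(W_\alpha)|\equiv 1\ (\mathrm{mod}\ 3)$ eliminates $3$, leaving $|Fix(W_\alpha)|=1$, i.e. $Fix(W_\alpha)=\{C\}$.

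For the local behaviour I would compute the Jacobian of $W_\alpha$ at $C$. Because of the cyclic symmetry it is the circulant matrix
\[
J(C)=\frac{2}{3}\left(\begin{array}{ccc} 2-\alpha & 1 & \alpha\\ \alpha & 2-\alpha & 1\\ 1 & \alpha & 2-\alpha \end{array}\right),
\]
whose eigenvalues are $\frac{2}{3}\big((2-\alpha)+\omega^{j}+\alpha\omega^{2j}\big)$ for $j=0,1,2$, with $\omega=e^{2\pi i/3}$. For $j=0$ the eigenvalue is $2$, with eigenvector $(1,1,1)$ transverse to $S^2$, so it is irrelevant to the dynamics on the simplex. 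The remaining two eigenvalues form a complex-conjugate pair, and a short computation (separating real and imaginary parts of $(2-\alpha)+\omega+\alpha\omega^2=\frac32(1-\alpha)+i\frac{\sqrt3}{2}(1-\alpha)$) gives modulus $\frac{2\sqrt3}{3}(1-\alpha)$.

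The trichotomy then follows by comparing this modulus with $1$: one has $\frac{2\sqrt3}{3}(1-\alpha)=1$ exactly when $1-\alpha=\frac{\sqrt3}{2}$, i.e. $\alpha=1-\frac{\sqrt3}{2}$. Thus the conjugate pair lies outside the unit circle (repelling) for $0<\alpha<1-\frac{\sqrt3}{2}$, inside it (attracting) for $1-\frac{\sqrt3}{2}<\alpha<1$, and on it with nonzero imaginary part (non-hyperbolic) at $\alpha=1-\frac{\sqrt3}{2}$, which is precisely statements (i)--(iii). The eigenvalue computation is routine once the circulant structure is noticed; I expect the only genuinely delicate step to be the uniqueness argument, namely confirming that the bordered determinant does not admit degenerate interior fixed points for $W_\alpha$ (the sign analysis that was transparent for $V_\alpha$ need not simplify as cleanly here), so that the Lyubich index count indeed collapses $Fix(W_\alpha)$ to the single point $C$.
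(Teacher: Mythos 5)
Your proposal is correct and takes precisely the route the paper intends: the paper states this proposition without a written proof, deferring to ``the same methods and techniques'' used for $V_\alpha$, and your argument --- $P$-equivariance giving $|Fix(W_\alpha)|\equiv 1\ (\mathrm{mod}\ 3)$, exclusion of boundary fixed points, the odd-count and at-most-$4$ results from \cite{Lyu} collapsing the count to $1$, then the circulant Jacobian at $C$ whose conjugate pair has modulus $\frac{2}{\sqrt{3}}(1-\alpha)$, crossing the unit circle exactly at $\alpha=1-\frac{\sqrt{3}}{2}$ --- is exactly that method, with all your computations checking out. The one step you flag but do not execute, verifying non-degeneracy of interior fixed points for $W_\alpha$ via the bordered determinant (the $V_\alpha$ sign analysis does not transfer verbatim), is likewise the only step the paper leaves implicit, so your attempt is no less complete than the paper's own treatment.
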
  

\begin{theorem}
Let $W_\alpha :S^2\to S^2$ be a quadratic stochastic operator given by \eqref{W_alpha}. Then the following statements hold true:

\begin{itemize}
\item[(i)] If $0<\alpha<1-\frac{\sqrt{3}}{2}$ then $\omega(x^0)\subset intS^2$ is an infinite compact set for any $x^0\neq C$;

\item[(ii)] If  $1-\frac{\sqrt{3}}{2}\leq\alpha<1$ then $\omega(x^0)=\{C\}$ for any $x^0\in S^2$.

\end{itemize}
\end{theorem}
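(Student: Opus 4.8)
The plan is to split along the threshold $\alpha^{*}=1-\frac{\sqrt{3}}{2}$ supplied by the previous Proposition, which is exactly the value where the nontrivial eigenvalues of the Jacobian of $W_\alpha$ at $C$ cross the unit circle. A direct computation shows that $J_{W_\alpha}(C)$ is circulant with eigenvalues $2$ (transverse to the simplex) and $\lambda_{1,2}=(1-\alpha)\bigl(1\pm\frac{i}{\sqrt{3}}\bigr)$ on the tangent plane $\{v:\ v_1+v_2+v_3=0\}$, so $|\lambda_{1,2}|=\frac{2(1-\alpha)}{\sqrt{3}}$, which equals $1$ precisely when $\alpha=\alpha^{*}$, and there $\lambda_1=e^{i\pi/6}$. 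Before separating cases I would record the boundary behaviour common to both parts. Since every term of $x_i'$ is nonnegative for $0<\alpha<1$, the equality $x_i'=0$ forces $x_i=x_{i-1}=0$, i.e. $x=e_{i+1}$; hence the only points of $S^2$ sent to $\partial S^2$ are the three vertices, and these land in the relative interiors of edges (for instance $e_1\mapsto(1-\alpha,\alpha,0)$), which are themselves sent into $intS^2$. Therefore $W_\alpha^2(S^2)\subset intS^2$, and being compact this image is bounded away from $\partial S^2$. In particular the tail $\{W^{(n)}_\alpha(x^0)\}_{n\ge2}$ lies in this fixed compact subset of $intS^2$, so for every $x^0$ the set $\omega(x^0)$ is a nonempty compact subset of $intS^2$.

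\emph{Part (i).} For $0<\alpha<\alpha^{*}$ the previous Proposition gives that $C$ is repelling, so $C\notin\omega(x^0)$ whenever $x^0\neq C$; it remains to prove $\omega(x^0)$ is infinite. Fix $x^{*}\in\omega(x^0)$; since $\omega(x^0)$ is closed and forward invariant, the entire forward orbit $\{W^{(n)}_\alpha(x^{*})\}_{n\ge1}$ lies in $\omega(x^0)$. As $C$ is the only fixed point and $W_\alpha$ has no periodic orbit other than $C$ in this regime, this orbit is infinite, whence $\omega(x^0)$ is infinite and compact in $intS^2$. The single ingredient not already in the excerpt is the absence of nontrivial cycles; I would establish it by the same method announced for $V_\alpha$, and this is the step I expect to be the most delicate in Part (i).

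\emph{Part (ii).} For $\alpha^{*}<\alpha<1$ the fixed point $C$ is attracting, so convergence to $C$ holds on a neighbourhood of $C$; the task is to globalize this, i.e. to show every trajectory eventually enters that neighbourhood. The natural device is a Lyapunov function $\psi$ with $\psi(W_\alpha(x))\le\psi(x)$ and equality only at $C$, which would force $\omega(x^0)\subseteq\{C\}$ and, with nonemptiness, give $\omega(x^0)=\{C\}$. I would stress that the obvious candidate $x_1^2+x_2^2+x_3^2$, which works for $W_1$, is \emph{not} monotone for general $\alpha$ (at $x=(\tfrac12,\tfrac12,0)$ it increases once $\alpha$ is close to $\alpha^{*}$), so the Lyapunov function has to be chosen with more care, adapted to the cyclic but non-reflection symmetry of $W_\alpha$; constructing it is the central analytic effort.

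The genuinely hard case is $\alpha=\alpha^{*}$, where $C$ is non-hyperbolic and the tangential linear part is the rotation $e^{i\pi/6}$ with no contraction, so linearization yields nothing. Here I would pass to a normal form on the invariant plane and compute the first resonant (cubic) coefficient; because $e^{i\pi/6}$ has order $12>4$ there is no strong resonance to interfere, so this coefficient is decisive, and showing it produces an inward spiral gives weak attraction of $C$. Combined with the trapping property $W_\alpha^2(S^2)\subset intS^2$ from the first paragraph, this yields $\omega(x^0)=\{C\}$ for all $x^0\in S^2$. Pinning down the sign of that resonant coefficient — equivalently, exhibiting a strict Lyapunov function valid at the critical parameter $\alpha=\alpha^{*}$ — is the main obstacle of the whole theorem.
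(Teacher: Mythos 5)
Your proposal can only be compared indirectly, because the paper contains no proof of this theorem at all: it is stated after the sentence that the results follow ``by means of the same methods and techniques which are used for the operator $V_\alpha$.'' Measured against that template, your part (i) is exactly the paper's argument for the corresponding $V_\alpha$ theorem --- interior trapping, $C$ repelling hence $C\notin\omega(x^0)$, closedness and forward invariance of $\omega(x^0)$, and absence of periodic points forcing the orbit of any $x^{*}\in\omega(x^0)$ to be infinite --- and the one step you single out as delicate (no nontrivial cycles) is precisely the step the paper itself asserts without proof in the $V_\alpha$ case. Two of your details are in fact improvements: the eigenvalue computation $\lambda_{1,2}=(1-\alpha)\left(1\pm \frac{i}{\sqrt{3}}\right)$ with $|\lambda_{1,2}|=\frac{2(1-\alpha)}{\sqrt{3}}$, crossing the unit circle at $\alpha^{*}=1-\frac{\sqrt{3}}{2}$ with $\lambda_1=e^{i\pi/6}$, checks out and matches the paper's proposition; and your two-iterate trapping $W_\alpha^2(S^2)\subset intS^2$ is the correct version of the paper's claim $V_\alpha(S^2)\subset intS^2$, which as literally stated fails at the vertices (e.g. $e_1\mapsto(1-\alpha,\alpha,0)\in\partial S^2$).

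For part (ii) the routes genuinely diverge. The paper's evident intention, judging by its treatment of the critical case $V_{1/2}$, is an explicit algebraic Lyapunov function: there the factorization identities for the differences $x_i'-x_j'$ yield $\phi(x)=|x_1-x_2|\,|x_1-x_3|\,|x_2-x_3|$ with $\phi(V_{1/2}(x))=\phi(x)\prod_{i=1}^{3}\frac{1+3x_i}{2}\le\phi(x)$, and for $W_1$ it cites $x_1^2+x_2^2+x_3^2-\frac{1}{3}$. You instead propose a Neimark--Sacker-type normal-form analysis at $\alpha=\alpha^{*}$; your remark that $e^{i\pi/6}$ has order $12>4$, so there is no strong resonance and the cubic coefficient decides, is correct (and pleasingly consistent with the paper's $12$ numerically observed ``hairs''). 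Your warning that the $W_1$ Lyapunov function does not transfer is also correct: at $\left(\frac{1}{2},\frac{1}{2},0\right)$ the squared norm increases under $W_\alpha$ exactly when $\alpha<\frac{3-\sqrt{5}}{2}\approx 0.38$, which covers a whole neighbourhood of $\alpha^{*}\approx 0.134$, so the paper's phrase ``same methods'' cannot be taken literally and your plan at least names the real difficulty. What your proposal is not, and honestly admits it is not, is a complete proof: the no-periodic-orbit claim in (i), the adapted Lyapunov function on $(\alpha^{*},1)$, and the sign of the resonant coefficient at $\alpha=\alpha^{*}$ all remain open. But these are exactly the ``technical parts of the proofs'' the authors declare they are avoiding, so your sketch is at least as rigorous as the source, and more candid about where the difficulty sits.
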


\begin{conjecture}
Let $W_\alpha :S^2\to S^2$ be a quadratic stochastic operator given by \eqref{W_alpha}, where  $0<\alpha<1-\frac{\sqrt{3}}{2}$. Then the following statements hold true:

\begin{itemize}
\item[(i)] {The operator $W_\alpha$ is non-ergodic;}

\item[(ii)] {The operator $W_\alpha$ exhibits a Li-Yorke chaos.}
\end{itemize}

\end{conjecture}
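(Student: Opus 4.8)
The plan is to treat $W_\alpha$ with small $\alpha$ as a perturbation of Zakharevich's operator $W_0=V_0$, for which both non-ergodicity \cite{Z,MSabu2007} and Li-Yorke chaos \cite{MSabu2012LY} are already established, and to transport the relevant dynamical skeleton from the boundary $\partial S^2$ into $intS^2$. Two structural facts are available as a starting point: by the preceding results the center $C$ is the unique fixed point and is repelling for $0<\alpha<1-\frac{\sqrt{3}}{2}$, while $\omega(x^0)$ is an infinite compact subset of $intS^2$ for every $x^0\neq C$. One checks exactly as for $V_\alpha$ that $W_\alpha$ commutes with the cyclic permutation $P$, so the whole analysis can be organized around the three- (and, together with the orientation, six-) fold symmetry of the simplex.

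For part (i), non-ergodicity, I would follow the Zakharevich scheme of \cite{Z} together with its generalization in \cite{GZ}. First I would partition $intS^2\setminus\{C\}$ into cyclically permuted sectors (the analogues of the sets $S_1,\dots,S_6$ used for $V_{1/2}$) and show that $W_\alpha$ advances every trajectory through these sectors in a fixed cyclic order, so that the limiting set carries a rotation-like structure. Next I would introduce a Zakharevich-type functional built from ratios of the coordinates, for instance a product of the form $\frac{x_1}{x_2}\cdot\frac{x_2}{x_3}\cdot\frac{x_3}{x_1}$ together with a monotone radial quantity measuring the distance to the attracting cycle, and track how it telescopes along the orbit. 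The aim is to show that the fraction of time the trajectory spends in each sector oscillates, so that the Cesaro averages $\frac1n\sum_{k=0}^{n-1}W_\alpha^k(x^0)$ accumulate at several distinct points; this is precisely the obstruction to ergodicity. The repelling nature of $C$ guarantees that the orbit is eventually trapped away from the center and genuinely circulates.

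For part (ii), Li-Yorke chaos, I see two routes. The cleaner one is to establish positive topological entropy for $W_\alpha$ on its interior attractor and then invoke the theorem of Blanchard--Glasner--Kolyada--Maass \cite{FBlanEGlasSKloyAMaa} that positive entropy forces Li-Yorke chaos; the entropy could be produced by exhibiting a horseshoe, i.e. a compact invariant subset on which a suitable power $W_\alpha^{k}$ is semiconjugate to the full shift. The more hands-on route, parallel to \cite{MSabu2012LY}, is to use the $P$-symmetry to reduce to an essentially one-dimensional return map on a transversal to the cyclic attractor, verify that this map has a point of period three (or otherwise satisfies the Li-Yorke hypothesis), and then pull an uncountable scrambled set back up to $S^2$. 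In either case one must produce an uncountable set $S\subset intS^2$ on which $\liminf_{n\to\infty} d\left(W_\alpha^n(p),W_\alpha^n(q)\right)=0$ and $\limsup_{n\to\infty} d\left(W_\alpha^n(p),W_\alpha^n(q)\right)>0$ for all distinct $p,q\in S$.

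The main obstacle, in both parts, is to pin down the exact structure of the interior attractor and to convert the numerical and perturbative picture into uniform analytic estimates. For $\alpha=0$ the invariant skeleton sits on the invariant boundary edges, where explicit one-dimensional reductions are available; once $\alpha>0$ the attractor detaches into $intS^2$ and those explicit reductions are lost, so the delicate step is to control the interior dynamics --- the existence and hyperbolicity of the relevant cyclic invariant set, together with uniform estimates on the Zakharevich functional --- without the benefit of an invariant edge. Persistence of the heteroclinic-type cycle under the perturbation, and the passage from the known $\alpha=0$ behavior to the whole range $0<\alpha<1-\frac{\sqrt{3}}{2}$, is where the real work lies.
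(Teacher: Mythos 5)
You should first note a mismatch of genre: the statement you were asked to prove is labelled a \emph{conjecture} in the paper, and the paper contains no proof of it at all --- the authors explicitly defer the detailed analysis to ``upcoming papers.'' What the paper actually establishes for $W_\alpha$ in the range $0<\alpha<1-\frac{\sqrt{3}}{2}$ is only the supporting skeleton you also start from: $Fix(W_\alpha)=\{C\}$, the fixed point $C$ is repelling precisely on this parameter interval, and $\omega(x^0)$ is an infinite compact subset of $intS^2$ for every $x^0\neq C$; beyond that the evidence is numerical (pictures of attractors and the observed fluctuation points near $\alpha\approx 0.13333$ and $\alpha=1-\frac{\sqrt{3}}{2}$). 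So there is no paper proof to compare your route against, and your text must stand on its own as a proof attempt.

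As such, it has genuine gaps --- some of which you candidly flag yourself, which is to say it is a research program rather than a proof. Concretely: (a) the proposed Zakharevich-type functional $\frac{x_1}{x_2}\cdot\frac{x_2}{x_3}\cdot\frac{x_3}{x_1}$ is identically equal to $1$ on $intS^2$, so it telescopes to nothing and cannot detect any oscillation of sector occupation times; this ingredient is vacuous as written. (b) More seriously, Zakharevich's mechanism for $\alpha=0$ rests on the invariance of $\partial S^2$: orbits converge to the boundary and spend geometrically growing stretches of time near the vertices, which is what makes the Cesaro means diverge. Once $\alpha>0$ one has $W_\alpha(\partial S^2)\subset intS^2$, the boundary is no longer invariant, and the attractor detaches into the interior, so divergence of $\frac{1}{n}\sum_{k=0}^{n-1}W_\alpha^k(x^0)$ requires a new slowing-down mechanism, not a perturbative transplant of the old one; moreover no perturbative argument from $\alpha=0$ can be uniform over the whole interval $0<\alpha<1-\frac{\sqrt{3}}{2}$, since at the right endpoint the dynamics becomes regular, with $\omega(x^0)=\{C\}$. (c) The sector-cycling step is asserted by analogy with $V_{1/2}$, but for $V_{1/2}$ the paper derives it from the exact factorizations \eqref{mainequality}, and no analogous identities are exhibited (or guaranteed to exist in such clean form) for $W_\alpha$ given by \eqref{W_alpha}. (d) For Li-Yorke chaos, both of your routes --- positive entropy plus the theorem of \cite{FBlanEGlasSKloyAMaa}, or a period-three point of a one-dimensional return map as in \cite{MSabu2012LY} --- are correctly identified as sufficient, but neither the horseshoe nor the return map is constructed; producing either one is the entire content of the claim. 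Your overall strategy is consistent with the heuristics the paper itself offers, but the statement remains open, for you as for the authors.
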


\subsection{Numerical Results on Dynamics of $W_\alpha$} 
We are going to present some pictures of attractors (an omega limiting set) of the operator $W_\alpha:S^2\to S^2$ given by \eqref{W_alpha}. 

In this cases $\alpha=0$ and $\alpha=1$, the operator $W_0$ is a chaotic operator and the operator $W_1$ is regular. Since $W_\alpha=(1-\alpha)W_0+\alpha W_1$, the mutation operator $W_\alpha$ gives a transition from the chaotic behavior to the regular behavior. Consequently, we are aiming to find \textbf{fluctuation points} in which \textit{we can see the transition from the chaotic behavior to the regular behavior.}

\begin{figure}[h!]\label{WFigalpha00101}
\begin{center}
\includegraphics[height=4cm,width=4cm]{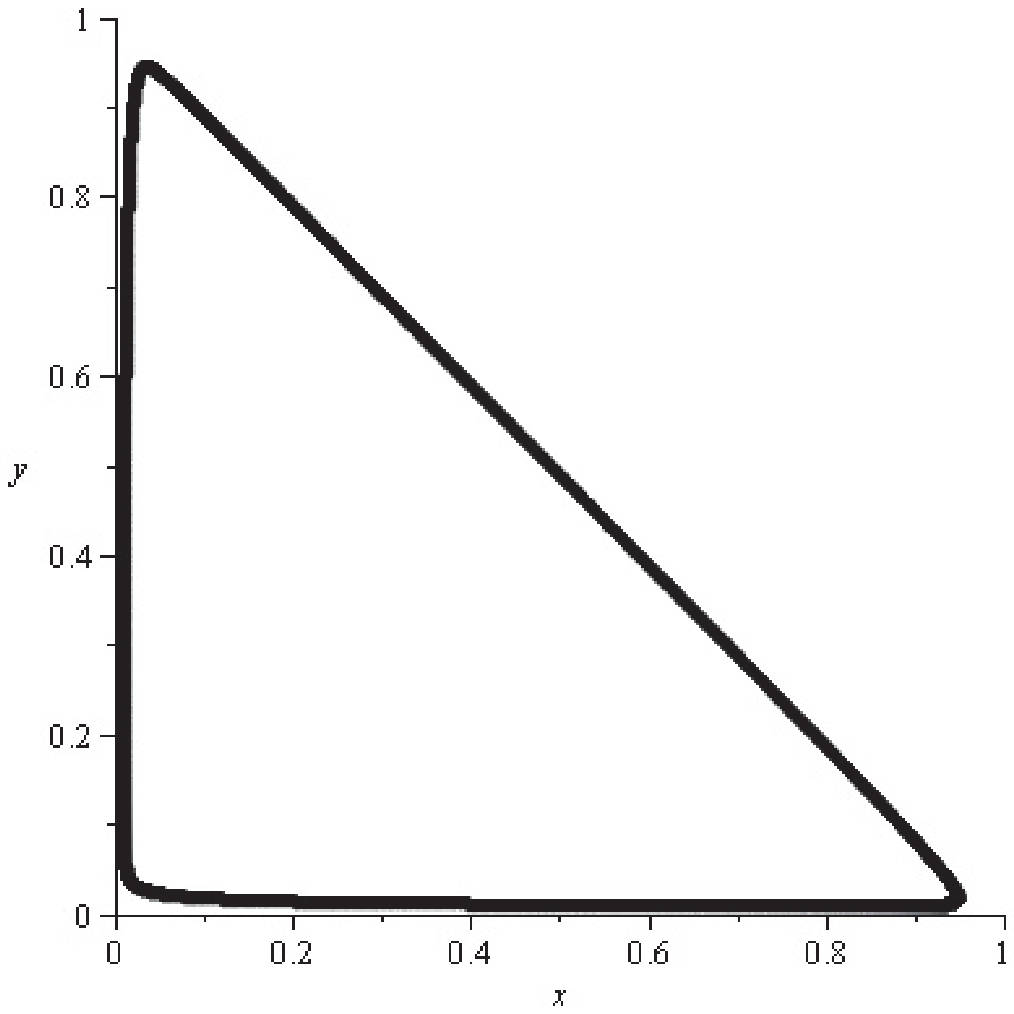} \quad \quad \quad \quad \quad 
\includegraphics[height=4cm,width=4cm]{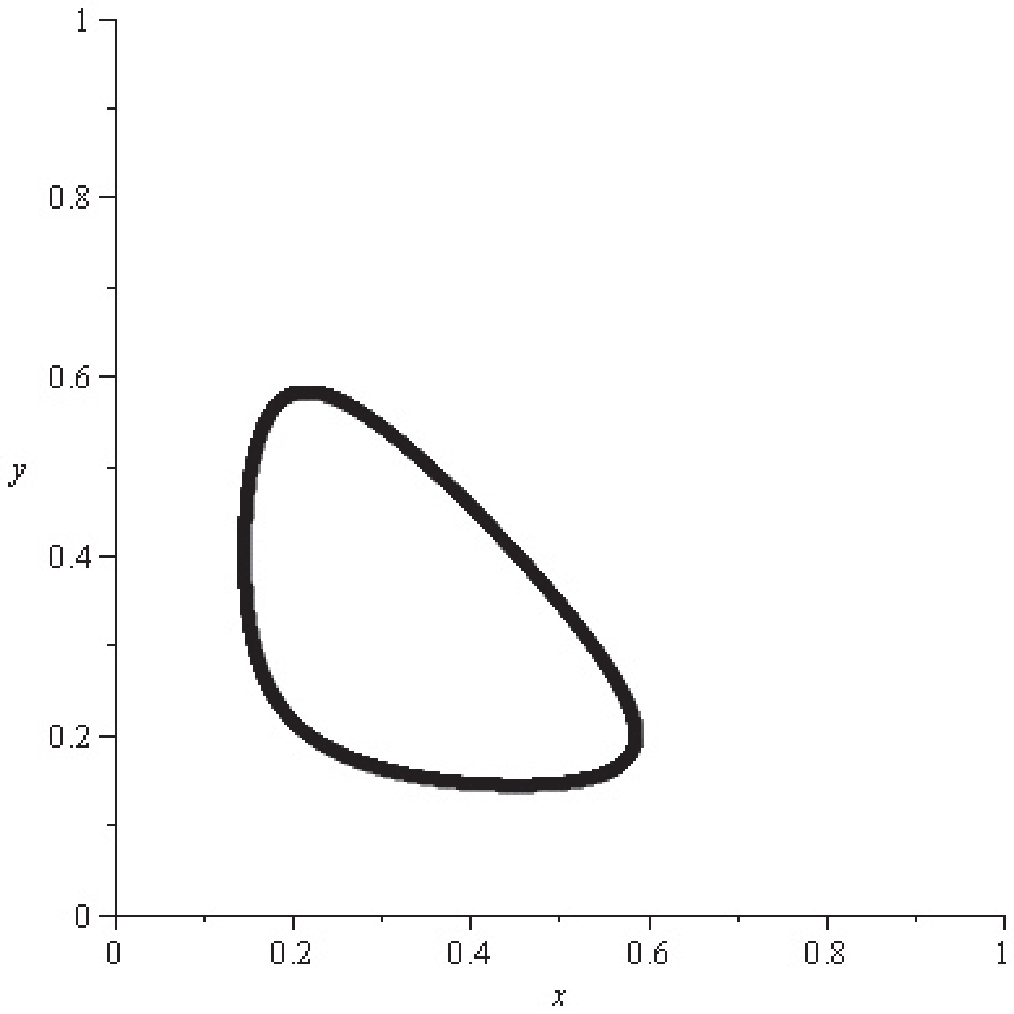}
\caption{Attractors of $W_\alpha$: $\alpha=0.001$ and $\alpha=0.01$}
\end{center}
\end{figure}

If $\alpha$ is very close to $0$ then attractors of the operator $W_\alpha$ are splitted from the boundary of the simplex. However, the influence of the operator $W_0$ is still very high. Therefore, we are expecting that the operator $W_\alpha$ is non-ergodic and chaotic, whenever $\alpha$ is very close to $0$. 

If $\alpha$ becomes to close to $0.13333$ then we can see a different picture. In attractors, we are able to detect some \textbf{"growing hairs"} (it is a just terminology).  Moreover, if we shall continue to increasing $\alpha$ then these "hairs" will start to rise and they eventually become straight. Therefore, \textbf{$\alpha=0.13333$ is the first fluctuation point.} It is very interesting that the number of "hairs" is equal to 12. It is again a question: why do we have 12 numbers of "hears" and why not 11 or 13.

\begin{figure}[h!]\label{WFigalpha01333}
\begin{center}
\includegraphics[height=4cm,width=4cm]{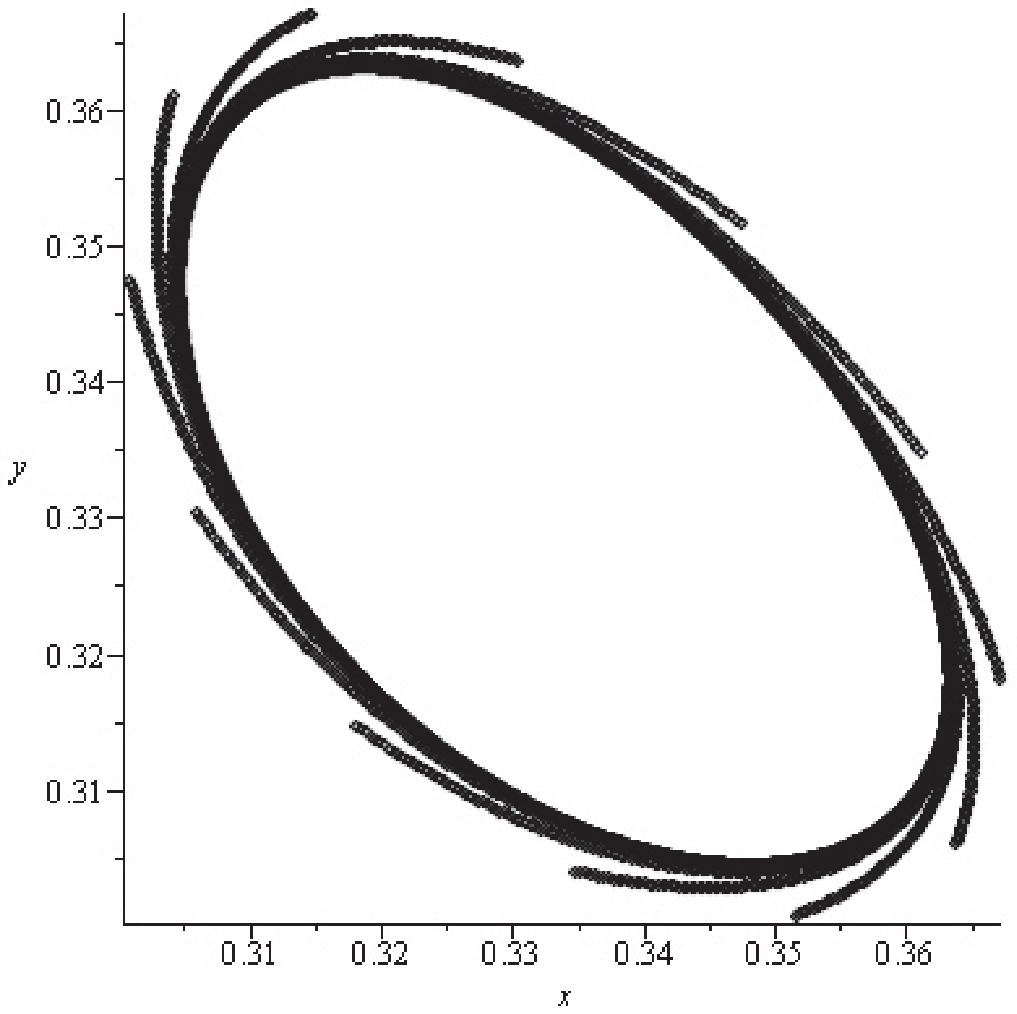} \quad \quad \quad \quad \quad 
\includegraphics[height=4cm,width=4cm]{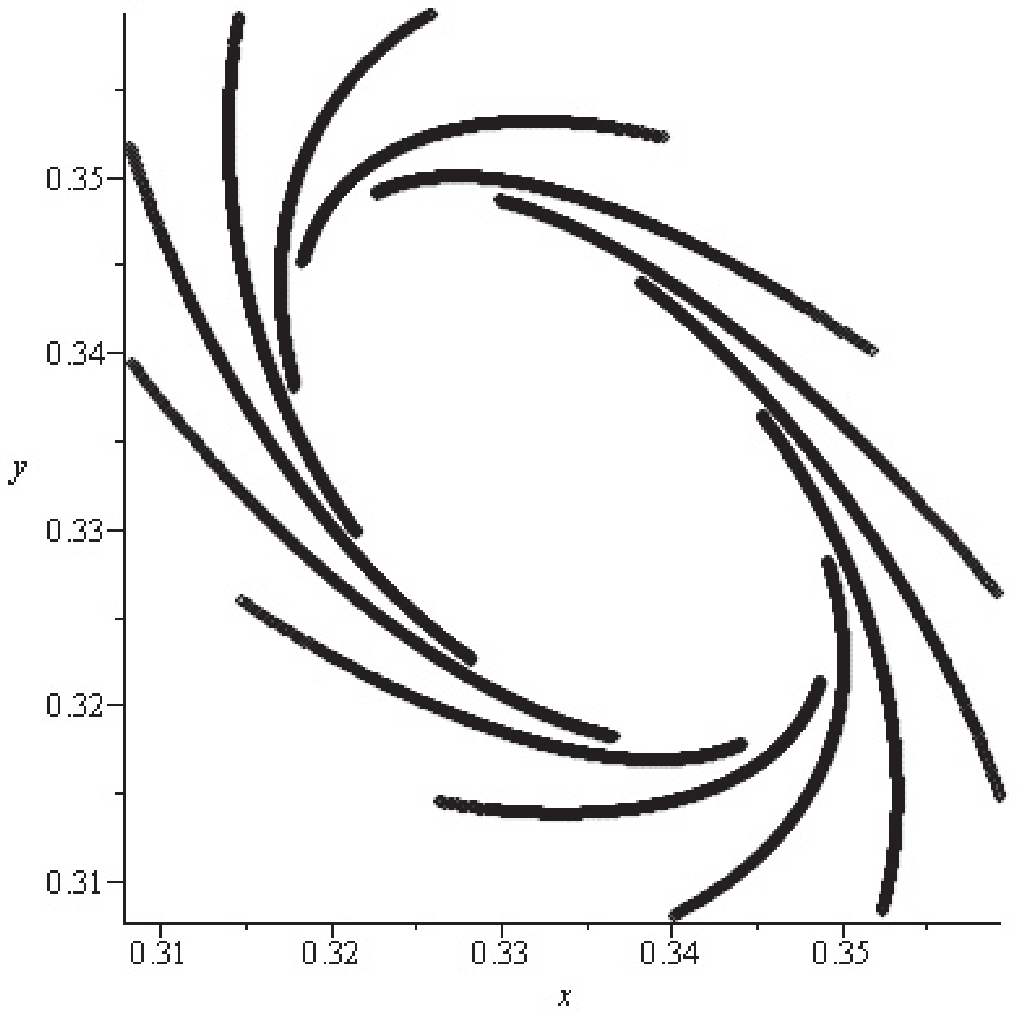}
\caption{Attractors of $W_\alpha$: $\alpha=0.13333$ and $\alpha=0.1338$}
\end{center}
\end{figure}

\begin{figure}[h!]\label{WFigalpha01390144015}
\begin{center}
\includegraphics[height=4cm,width=4cm]{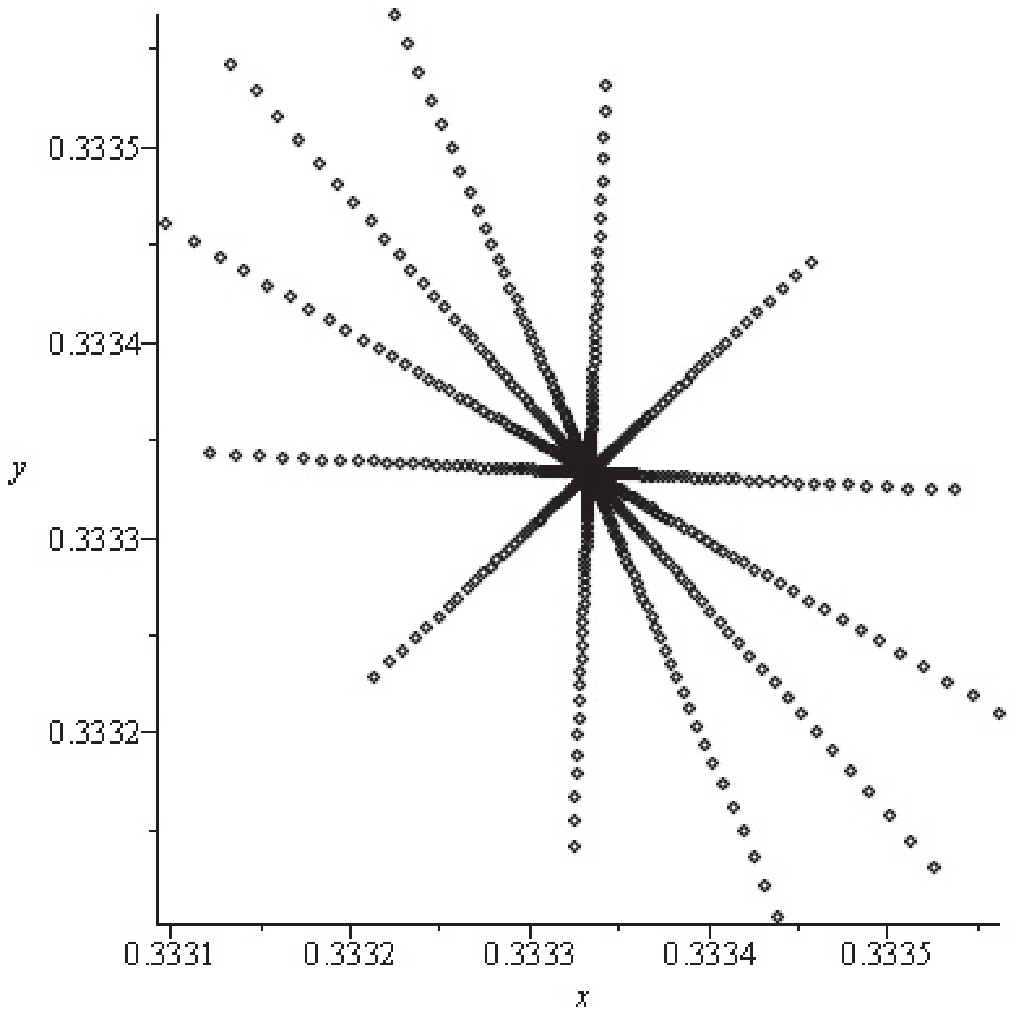} \quad \quad  
\includegraphics[height=4cm,width=4cm]{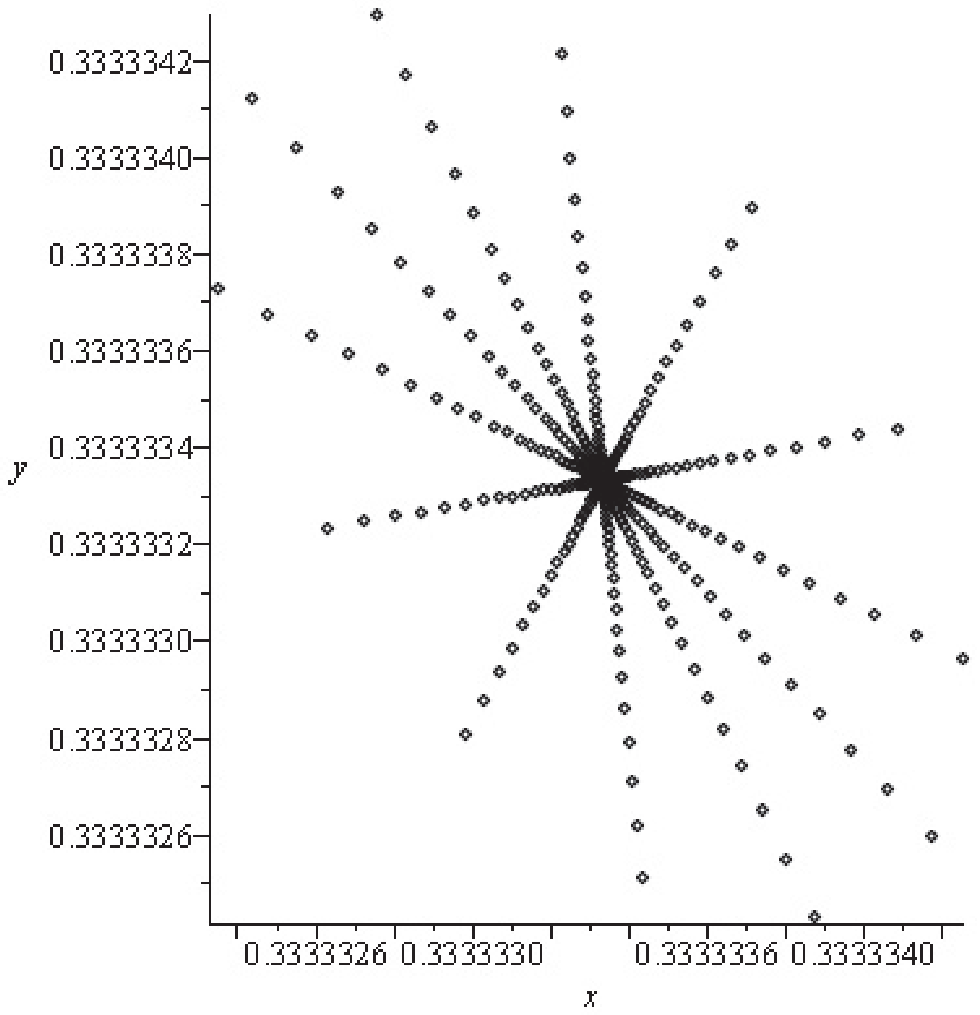}\quad \quad
\includegraphics[height=4cm,width=4cm]{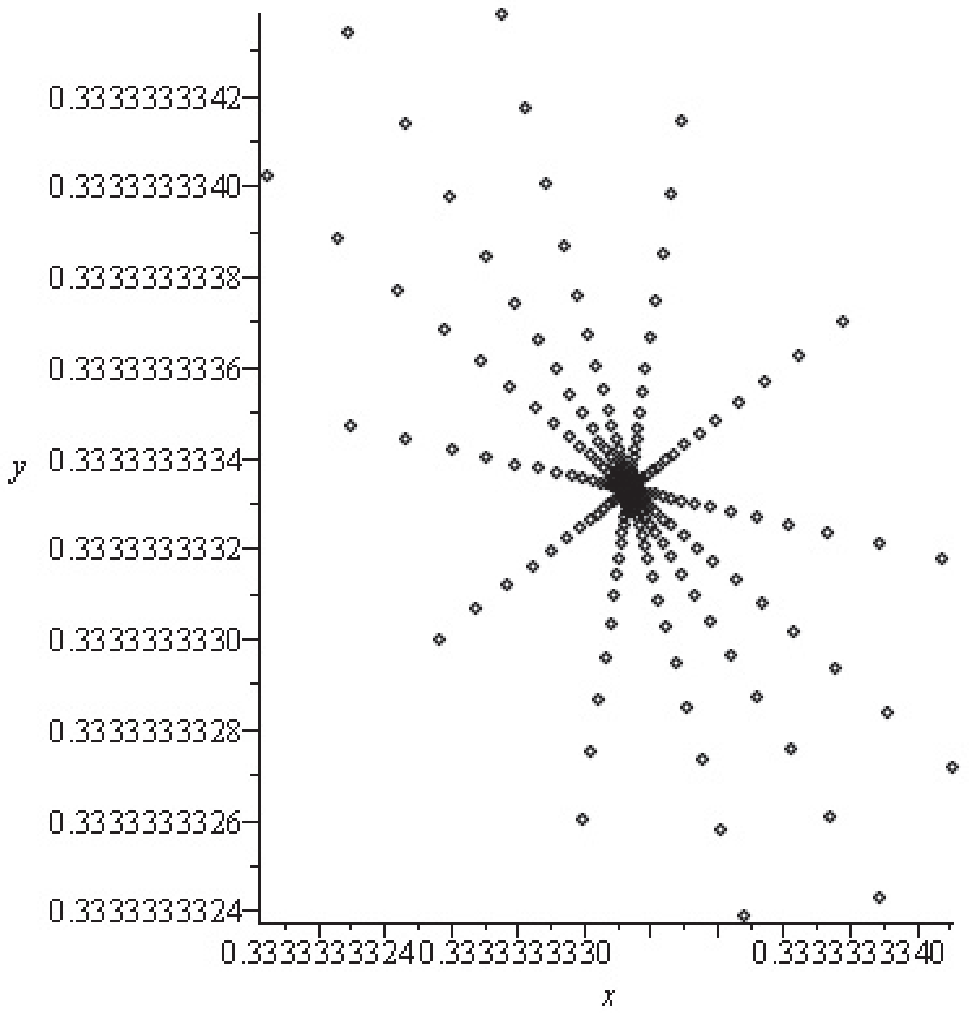}
\caption{Attractors of $W_\alpha$: $\alpha=0.139$, $\alpha=0.144$, and $\alpha=0.150$}
\end{center}
\end{figure} 
  
From these pictures, we can find \textbf{another fluctuation point $\alpha=1-\frac{\sqrt{3}}{2}\simeq 0.1339745$.} Therefore, in order to have a transition from  a chaotic behavior to the regular behavior, we should cross from \textbf{two fluctuation points $\alpha=0.13333$ and $\alpha=1-\frac{\sqrt{3}}{2}\simeq 0.1339745$.}

\section{ Conclusion }

In this paper, we present a mathematical model of the mutation in the biological environment having 3 alleles. We have presented two types of mutations. We have shown that a mutation (a mixing) in the system can be considered as a convex combination of Mendelian inheritances (extreme or non-mixing systems). The first mutation is a convex combination of two Li-Yorke chaotic systems and the second mutation is a convex combination of Li-Yorke chaotic and regular systems. 

In the first case, the first mutation can be considered as an evolution process between two different chaotic  biological systems. In this case, we have shown that there is one fluctuation point (transition point) in order to change one chaotic system into another chaotic system. In the second case, the second mutation can be considered as an evolution process between chaotic and regular biological systems. In this case, we need two fluctuation points (transition points) in order to change the chaotic system into the regular system. We hope that we can find a similar phenomenon in nature.

\end{document}